\newtheorem{theorem}{Theorem}[section]
\newtheorem{lemma}[theorem]{Lemma}
\newtheorem{proposition}[theorem]{Proposition}
\newtheorem{question}[theorem]{Question}
\theoremstyle{definition}
\newcommand{\RR}{{\mathbb R}}
\newcommand{\CC}{{\mathbb C}}
\newcommand{\ZZ}{{\mathbb Z}}
\newcommand{\FF}{{\mathbb F}}
\newcommand{\Arf}{{\mathrm{Arf}}}
\newcommand{\Spin}{{\rm Spin}}
\theoremstyle{remark}
\newtheorem{remark}[theorem]{Remark}
\begin{document}

\title[Extending periodic maps on surfaces over  the 4-sphere]
{Extending periodic maps on surfaces over  the 4-sphere}

\author{Shicheng Wang}
\address{Department of Mathematics, Peking University, Beijing 100871, China}
\email{wangsc@math.pku.edu.cn}

\author{Zhongzi Wang}
\address{Department of Mathematics, Peking University, Beijing 100871, China}
\email{2201110025@stu.pku.edu.cn}

\subjclass[2010]{Primary 57N35; Secondary 57M60}

\keywords{periodic maps, diffeomorphism, extension, spin structure.}

\begin{abstract}
Let $F_g$ be the closed  orientable surface of genus $g$.
We address the problem to extend torsion elements of the mapping class group ${\mathcal{M}}(F_g)$ over the 4-sphere $S^4$. 
Let  $w_g$  be a  torsion element of maximum order in ${\mathcal{M}}(F_g)$. 
Results including:

(1) For each $g$,
$w_g$ is periodically  extendable over $S^4$ for some non-smooth embedding  $e: F_g\to S^4$,
and  not periodically extendable over $S^4$ for any smooth embedding  $e: F_g\to S^4$.

(2) For each $g$, $w_g$  is extendable over $S^4$ for some smooth embedding  $e: F_g\to S^4$
if and only if $g=4k, 4k+3$.

(3)  Each torsion element of order $p$ in ${\mathcal{M}}(F_g)$ is extendable over $S^4$ for some smooth embedding  $e: F_g\to S^4$ if 
 either (i)  $p=3^m$ and $g$ is even;
 or (ii) $p=5^m$ and $g\ne 4k+2$;
 or (iii) $p=7^m$.
 Moreover  the conditions on $g$ in (i) and (ii) can not be removed .  
\end{abstract}

\date{}
\maketitle
\tableofcontents
\section{Introduction}

In this paper, we will work on the smooth, orientable category unless otherwise clearly stated.  
We use $Aut(M)$ to denote the orientation-preserving diffeomorphism group of a manifold $M$, and 
$|A|$ to denote the cardinality of a finite set $A$. 
Let $\RR^n$ and $S^n$ be the $n$-space and $n$-sphere respectively.
Let $F_g$ be the closed orientable surface of genus $g>0$. 
Let ${\mathcal{M}}(F_g)$ be the mapping
class group of $F_g$, i.e., the group of
smooth isotopy classes of orientation preserving
diffeomorphisms on $F_g$. 

An element $f$ in $Aut(F_g)$ is {\it extendable over $M$ with respect to an embedding $e:F_g\rightarrow M$} if there exists an element $\tilde f$ in $Aut(M)$ such that $$\tilde f\circ e=e\circ f.$$
When $f$ is a map of finite order, we say {\it $f$ is {\it periodically extendable} over $M$ for $e$}, if $\tilde f$ above is a map of the same order. Call an element $f$ in $Aut(F_g)$ is {\it (periodically) extendable over $M$}, if  $f$ is  (periodically) extendable over $M$ for some embedding $e$.

In the definition above, if we allow $\tilde f$ to be just a homeomorphism on $M$, then we call $f$ is {\it topologically extendable}.


We address the problem when torsion elements of $Aut(F_g)$ are extendable, or periodically extendable  over $S^4$.
There are two motivations of this study.

One motivation is rather naive:  In order to get intuitive feelings of symmetries on surfaces,  people try to embed those surfaces symmetries into the symmetries
of our 3-space (3-sphere) in which we live.
 Some  recent work in this direction include 
 \cite{WWZZ}, \cite{NWW}. 
The first one tell us that  each periodic map on $F_g$ of order $k>2g+2$ are not  periodically
extendable over $S^3$ for any embedding; the second  one claims  if a periodic map $f$ on $F_g$ is extendable  over $S^3$ for some embedding $e: F_g\to S^3$, then $f$ is periodically extendable over $S^3$ for some embedding $e' : F_g\to S^3$.  Therefore  each map of order $k>2g+2$ is not
extendable  over $S^3$ for any embeddings.

Another motivation is to  extend surface automorphisms over $S^4$ (or  $\RR^4$): In 1983, for a trivial smooth
embedding of the torus  $F_1$ in $S^4$,  Montesinos proved that $f\in  Aut(F_1)$ is extendable over $S^4$
if and only if $f$ preserves the induced Rohlin form \cite{Mon}.  In 2002 the same criterion
is proved by Hirose for any $g$  in a comprehensive work \cite{Hi}. 
In 2012 it is proved if $f\in Aut(N^n)$ is extendable (or topologically extendable) over $\RR^{n+2}$  for any co-dimension 2
smooth embedding $e: N^n\to \RR^{n+2}$, then $f$ preserves  the  induced spin structures \cite{DLWY}. 
In the case of $F_g\subset S^4$ (or $\RR^4$), the induced Rohlin forms and induced spin structures coincide (Lemma \ref{Rohlin}).
To apply above results to see if an element in ${\mathcal{M}}(F_g)$ is extendable, we may first to try a simpler class, the torsion elements, specially  those with either big orders or prime orders, due geometric and algebraic reasons.

Recall that the largest and the second largest orders of  torsions  in ${\mathcal{M}}(F_g)$ are $4g+2$  and $4g$.
Torsion elements of order $4g+2$ are called Wiman maps.

\begin{theorem}\label{group-ext} Let $w_g$ be a Wiman map on $F_g$.

(1)  Then $w_g$ is periodically  extendable over $S^4$ for some non-smooth embedding  $e: F_g\to S^4$ for each $g$;

(2) $w_g$  is not periodically extendable over $S^4$ for any smooth embedding  $e: F_g\to S^4$ for each $g$.
\end{theorem}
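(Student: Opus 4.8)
My plan is to realize the Wiman map by a concrete algebraic model and then exploit its central involution.

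\emph{Part (1).} Since all torsion elements of maximal order $4g+2$ are conjugate in $\mathcal{M}(F_g)$ and (periodic) extendability is a conjugacy invariant, it suffices to treat the standard model. I would take the affine hyperelliptic curve $C=\{(x,y)\in\CC^2: y^2=x^{2g+1}-1\}$, which is smooth and homeomorphic to $F_g$ minus one point (it is the projective curve minus the single point over $x=\infty$), and set $w_g(x,y)=(\zeta x,-y)$ with $\zeta=e^{2\pi i/(2g+1)}$; a direct check gives $\mathrm{ord}(w_g)=4g+2$. Writing $S^4=\CC^2\cup\{\infty\}$, the closure $\overline C=C\cup\{\infty\}$ is the one-point compactification of $C$, hence homeomorphic to $F_g$, yielding a topological embedding $e:F_g\to S^4$. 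The unitary map $w_g$ extends to a smooth periodic diffeomorphism $\tilde w_g$ of $S^4$ of order $4g+2$ fixing $\infty$, with $\tilde w_g\circ e=e\circ w_g$; thus $w_g$ is periodically extendable for $e$. Finally $e$ is not smooth: the link of $\overline C$ at $\infty$ is the $(2,2g+1)$ torus knot (governed by the leading term $y^2=x^{2g+1}$), so $e$ is not locally flat there; alternatively, smoothness would contradict Part (2).

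\emph{Part (2).} Suppose for contradiction that some smooth embedding $e:F_g\to S^4$ admits a periodic extension $T=\tilde w_g$ of order $4g+2$ with $T(e(F_g))=e(F_g)$ and $T|_{F_g}=w_g$; identify $F_g$ with its image. The central element $\iota:=T^{2g+1}$ is a smooth orientation-preserving involution of $S^4$ whose restriction to $F_g$ is the hyperelliptic involution, which fixes exactly the $2g+2$ Weierstrass points. By Smith theory $\mathrm{Fix}(\iota)$ is a $\ZZ_2$-homology sphere of even codimension; since it contains $2g+2>2$ points it is not $S^0$, hence it is an embedded $2$-sphere $\Sigma$ (a $\ZZ_2$-homology $S^2$ has $\chi=2$). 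At each Weierstrass point $w$ the involution acts as $-1$ on $T_wF_g$, so $T_wF_g=N_w\Sigma$; therefore $F_g$ meets $\Sigma$ transversally and precisely in these $2g+2$ points.

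I would then use the full action. As $T$ commutes with $\iota$ it preserves $\Sigma$, and $T$ preserves the orientations of $S^4$ and of $F_g$. On $F_g$ the map $w_g$ permutes the $2g+2$ Weierstrass points as one cycle of length $2g+1$ (the points $(\zeta^k,0)$) together with the fixed point $\infty$. Along the odd orbit the local intersection sign is constant: if $T|_\Sigma$ reversed orientation the signs would alternate, but going once around the odd cycle returns via $\iota|_\Sigma=\mathrm{id}$, forcing $\epsilon_w=-\epsilon_w$, which is impossible; hence $T|_\Sigma$ is orientation-preserving and the $2g+1$ signs agree. Writing $\epsilon$ for this common sign and $\epsilon_\infty$ for the sign at $\infty$, the algebraic intersection number is
$$[F_g]\cdot[\Sigma]=(2g+1)\,\epsilon+\epsilon_\infty\in\{\pm(2g+2),\ \pm 2g\},$$
which is nonzero for every $g\ge 1$. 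This contradicts $H_2(S^4;\ZZ)=0$, by which any two closed oriented surfaces in $S^4$ have algebraic intersection number $0$.

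The main obstacle is Part (2): one must extract enough equivariant structure on $S^4$ from the hypothesis to force a homological contradiction. The decisive steps are identifying the central involution with the hyperelliptic involution, pinning down $\mathrm{Fix}(\iota)$ via Smith theory, and — crucially — using the \emph{odd} $(2g+1)$-orbit of Weierstrass points under $T$ to prevent the $2g+2$ transverse intersection signs from cancelling. A heavier alternative replaces the last step by the Atiyah--Singer $G$-signature theorem for the $\ZZ_{4g+2}$-action (using $\mathrm{sign}(T^k,S^4)=0$) together with a number-theoretic incompatibility of the rotation numbers at the two fixed points of $T$, but the elementary intersection argument above is preferable.
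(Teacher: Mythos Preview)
Your proof is correct in both parts, but Part (2) takes a genuinely different route from the paper, so let me compare.

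\textbf{Part (1).} Your algebraic model (the affine hyperelliptic curve $y^2=x^{2g+1}-1$ compactified at $\infty\in S^4$) and the paper's geometric model (the Seifert surface of the $(2,2g+1)$ torus knot in $S^3$, coned to the origin of $\RR^4$) are two presentations of the same object: in each case the embedding is smooth away from one point, the link at that point is the $(2,2g+1)$ torus knot, and the periodic extension lies in $SO(4)$. Your version is cleaner to write down; the paper's version is more visual. No substantive difference.

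\textbf{Part (2).} Here the strategies diverge. The paper passes to $f=w_g^{\,2}$ of odd order $2g+1$, observes that $f$ has exactly \emph{three} fixed points on $F_g$, and then argues (via Smith theory and an eigenvalue transversality lemma) that $F_g$ meets the fixed $2$-sphere of $\tilde f$ transversally in an odd number of points, contradicting the vanishing of the $\bmod\,2$ intersection number in $S^4$. This is short and uses only $\ZZ_2$ information.

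You instead pass to the central involution $\iota=w_g^{\,2g+1}$, whose restriction is the hyperelliptic involution with $2g+2$ fixed points. Since $2g+2$ is even the $\bmod\,2$ argument is unavailable, and you compensate by bringing in the full $T$-action: the Weierstrass points split as a single $(2g+1)$-orbit plus a fixed point, and the odd length of the orbit forces $T|_\Sigma$ to preserve orientation, so the $2g+1$ signs agree and the algebraic intersection number is $(2g+1)\epsilon+\epsilon_\infty\in\{\pm 2g,\pm(2g+2)\}\neq 0$. This is correct and is a nice equivariant refinement, but it costs you the extra step of controlling signs. The paper's choice of the power $w_g^{\,2}$ (three fixed points, odd) sidesteps that entirely. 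If you want the shortest proof, adopt the paper's power; if you want to illustrate how the ambient symmetry constrains integral intersection numbers, your argument is a good example.
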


\begin{theorem}\label{4g+2,4g} 

(1)  Let $w_g$ be a Wiman map on  $F_g$. Then  $w_g$  is extendable over $S^4$ for some smooth embedding  $e: F_g\to S^4$
if and only if $g=4k, 4k+3$.

(2) Let $v_g$ be a map of order $4g$ on  $F_g$. Then  $v_g$  is extendable over $S^4$ for some smooth embedding  $e: F_g\to S^4$
if and only if $g=4k, 4k+1, 4k+3$.
\end{theorem}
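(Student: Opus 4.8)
My plan is to convert the extension question into a question about invariant spin structures, using the criterion recalled in the introduction. By Hirose \cite{Hi} (after Montesinos \cite{Mon}), a finite order $f\in\Aut(F_g)$ is extendable over $S^4$ with respect to a smooth embedding $e$ if and only if $f$ preserves the Rohlin form induced by $e$, and by Lemma \ref{Rohlin} this Rohlin form is the $\ZZ/2$-quadratic refinement $q_e$ of the intersection form on $H_1(F_g;\ZZ/2)$ determined by the spin structure that $e$ induces. Since a smoothly embedded surface in $S^4$ is null-homologous and hence characteristic, the Guillou--Marin formula forces $\Arf(q_e)=0$; conversely every $\Arf=0$ refinement is realized by some smooth embedding (a realization statement I return to as the main subtlety below). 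So the first step is the reformulation: \emph{a periodic map $f$ is extendable over $S^4$ for some smooth embedding if and only if some $f$-invariant quadratic refinement $q$ of the intersection form on $H_1(F_g;\ZZ/2)$ has $\Arf(q)=0$}. Both $w_g$ and $v_g$ are then analyzed by enumerating their invariant refinements and computing the resulting Arf invariants.

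Next I would fix the standard hyperelliptic models. The Wiman map is realized on $w^2=z^{2g+1}-1$ by $(z,w)\mapsto(\zeta z,-w)$ with $\zeta$ a primitive $(2g+1)$-th root of unity, whose $(2g+1)$-th power is the hyperelliptic involution; hence $w_g$ acts on $H_1(F_g;\ZZ/2)$ through the permutation $\rho$ of the $2g+2$ branch points consisting of one fixed point ($\infty$) and a single $(2g+1)$-cycle on the finite roots. Similarly the order $4g$ map is realized on $y^2=x(x^{2g}-1)$ by $(x,y)\mapsto(\eta^2 x,\eta y)$ with $\eta$ a primitive $4g$-th root of unity; its $2g$-th power is again the hyperelliptic involution, and the induced $\rho$ has two fixed points ($0$ and $\infty$) and a single $2g$-cycle. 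I would then use the classical identification of quadratic refinements on a hyperelliptic curve with subsets $T$ of the branch set $B$ satisfying $|T|\equiv g+1\pmod 2$, taken modulo $T\sim B\setminus T$, under which $q_T(S)=\tfrac12|S|+|S\cap T|\bmod 2$ on an even representative $S$. This is a bijection onto all $2^{2g}$ refinements, and since the hyperelliptic involution acts trivially on $H_1(F_g;\ZZ/2)$, the $f$-invariant refinements are exactly the $q_T$ with $\rho$-invariant $T$.

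The enumeration is then short. For $w_g$ the only $\rho$-invariant $T$ of the correct parity is, up to complement, $\{\infty\}$ when $g$ is even and $\emptyset$ when $g$ is odd, giving a \emph{unique} invariant refinement in each case; for $v_g$ there are two invariant classes in each case, represented by $\{0\}$ and $\{\infty\}$ for $g$ even and by $\emptyset$ and $\{0,\infty\}$ for $g$ odd. Finally I would read off the Arf invariants from the Gauss sum $\sum_{v}(-1)^{q(v)}=\pm 2^g$, whose sign is $+$ exactly when $\Arf=0$; evaluating it reduces to computing the real and imaginary parts of $(1+i)^{2g+2}$, $(1+i)^{2g+1}$ and $(1+i)^{2g}$. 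These give: for $w_g$, $\Arf=0$ iff $g\equiv 0,3\pmod 4$; for $v_g$, at least one invariant refinement has $\Arf=0$ unless $g\equiv 2\pmod 4$, in which case both invariant refinements have $\Arf=1$. Combined with the reformulation this yields precisely the stated conditions $g=4k,4k+3$ in (1) and $g=4k,4k+1,4k+3$ in (2).

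The main obstacle I expect is twofold. First, the realization half of the reformulation---that every $\Arf=0$ form on $H_1(F_g;\ZZ/2)$ genuinely arises from a smooth embedding $F_g\to S^4$, so that the existence of an invariant $\Arf=0$ form actually produces an extension---must be established or cited with care, since only this direction upgrades the Guillou--Marin obstruction into an ``if and only if'' extendability criterion. Second, the Arf bookkeeping is delicate: one must correctly pin down the branch-point permutations from the algebraic models, verify the well-definedness condition $|T|\equiv g+1\pmod 2$, and track the four residues of $g$ modulo $4$ through the Gauss sums without sign errors. The uniqueness of $w_g$ and of $v_g$ up to conjugacy and powers---so that extendability, being a conjugacy invariant, is well posed---is classical and I would invoke rather than reprove it.
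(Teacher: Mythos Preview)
Your approach is correct and structurally parallel to the paper's: both reduce the question to the existence of an $f$-invariant bounded spin structure (this is Theorem~\ref{criterion} in the paper), enumerate the invariant spin structures for the given periodic map, and then decide which of them are bounded via the real and imaginary parts of powers of $1+i$. The difference lies only in the model chosen for the enumeration. The paper works with the regular $4g$-gon presentation of $F_g$, obtaining a basis $x_1,\dots,x_{2g}$ of $H_1(F_g;\ZZ_2)$ on which $\tau_g=w_g^2$ and $v_g$ act by cyclic permutation and on which all off-diagonal $\ZZ_2$-intersection numbers equal~$1$; invariant quadratic forms are then those constant on the orbit, and boundedness is read off by counting zeros of $q$ according to the length of basis expansions (Lemmas~\ref{01}, \ref{0-1}, \ref{A_i}). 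You instead use the hyperelliptic models and the classical bijection between spin structures and subsets $T$ of the branch set of the correct parity modulo complementation, so that invariance becomes $\rho$-invariance of $T$ and the Arf invariant is obtained from the Gauss sum. Your route is somewhat cleaner and connects to standard theta-characteristic language; the paper's is more self-contained and purely topological. On your flagged realization subtlety: the paper handles it exactly as you anticipate, combining Hirose's theorem for \emph{trivial} embeddings with the transitivity of ${\mathcal M}(F_g)$ on $\mathcal B_g$ (Lemma~\ref{trans}); note that Hirose's ``if and only if'' is stated only for trivial embeddings, so your opening sentence slightly overstates it, though this does not affect the final criterion you use.
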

 
 \begin{theorem}\label{3-5-7}
 Each periodic map  $f: F_g\to F_g$  of order $p$ is extendable over $S^4$ if 
 
 (i)  $p=3^m$ and $g$ is even;
 
 (ii) $p=5^m$ and $g=4k, \, 4k+1, \, 4k+3$;
 
 (iii) $p=7^m$, where 7 can be replaced by infinitely many other primes. 
 
 Moreover,  the conditions on $g$ in (i) and (ii) can not be removed.  
 \end{theorem}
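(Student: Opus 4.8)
\section*{Proof proposal for Theorem \ref{3-5-7}}

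The plan is to turn extendability into a $\ZZ/2$--linear--algebra question and then into a count of $\FF_2[\ZZ/p]$--summands. By the extension criterion of Montesinos and Hirose \cite{Mon,Hi}, together with the identification in Lemma \ref{Rohlin}, a map $f$ is extendable over $S^4$ for some smooth embedding if and only if $f_\ast$ preserves a quadratic refinement of the intersection form on $V:=H_1(F_g;\FF_2)$ that is induced by an embedding, and the induced refinements are exactly those of Arf invariant $0$ (an embedded $F_g\subset S^4$ bounds a Seifert solid over which the spin structure extends). Now $p=\ell^m$ is odd, so $\langle f_\ast\rangle$ acts on the affine space of refinements, a torsor over $V^\ast$, through an affine action whose linear part is the action on $V^\ast$; since $|\langle f_\ast\rangle|$ is prime to $\mathrm{char}\,\FF_2$, the obstruction $H^1(\langle f_\ast\rangle;V^\ast)$ vanishes and an invariant refinement always exists. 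Hence the entire content is whether an \emph{invariant} refinement of Arf invariant $0$ exists.

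First I would dispose of the maps with nonzero fixed homology. Using semisimplicity ($p$ odd) write the orthogonal splitting $V=V^{f_\ast}\oplus V'$ into the trivial isotypic part and the rest, on each of which the form is nondegenerate. The invariant refinements form a torsor over $(V^\ast)^{f_\ast}=(V^{f_\ast})^\ast$; their restrictions to $V'$ all coincide (a fixed functional vanishes on $V'$), while their restrictions to $V^{f_\ast}$ sweep out \emph{every} refinement there. Since $\Arf(q)=\Arf(q|_{V^{f_\ast}})+\Arf(q|_{V'})$ and the first summand takes both values once $V^{f_\ast}\neq0$, such $f$ is always extendable. Thus only the maps with $V^{f_\ast}=0$ remain; as $\dim_{\FF_2}V^{f_\ast}=2h$ where $h$ is the genus of the quotient, these are exactly the cyclic branched covers of $S^2$, for which the invariant refinement $q_0$ is unique and $f$ is extendable iff $\Arf(q_0)=0$.

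The key step is to compute $\Arf(q_0)$ from the module $V$, which now has no trivial summand. Decompose $V$ into irreducible $\FF_2[\ZZ/p]$--blocks. A dual pair $W\oplus W^\ast$ is hyperbolic for $q_0$ (each factor is $q_0$--isotropic, since $q_0|_W$ is an invariant functional on a module with no invariants), hence contributes $0$. For a self-dual irreducible block $W\cong\FF_{2^d}$ the generator acts as multiplication by a primitive root of unity, so freely on $W\setminus\{0\}$; as $q_0$ is orbit-constant this gives
\begin{equation*}
\sum_{w\in W}(-1)^{q_0(w)}\equiv 1\pmod{\ell},
\end{equation*}
while self-duality forces $2^{d/2}\equiv-1\pmod{\ell}$; since the left side equals $\pm 2^{d/2}$, the sign must be $-$, i.e.\ $\Arf(q_0|_W)=1$. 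Therefore $\Arf(q_0)\equiv\#\{\text{self-dual blocks of }V\}\pmod 2$. Self-dual blocks occur iff $\mathrm{ord}_{\ell^j}(2)$ is even; for $\ell=7$ the order of $2$ is odd, and odd order mod $\ell$ forces odd order mod $\ell^j$, so no self-dual block arises and $\Arf(q_0)=0$ for every $g$, proving (iii) (the same holds for the infinitely many primes dividing $2^q-1$ with $q$ an odd prime, where $\mathrm{ord}_\ell(2)=q$ is odd). For $\ell=3,5$, where $2$ is a primitive root and $d_j=\mathrm{ord}_{\ell^j}(2)=(\ell-1)\ell^{j-1}$, every nontrivial irreducible is self-dual; reading off $d_j/2\bmod 2$ and inserting $2g=\dim V=\sum_{\text{blocks}}d_{j}$ gives $\Arf(q_0)\equiv g$ for $\ell=3$ (each block adds an odd $d_j/2$), hence extendability iff $g$ is even; and $\Arf(q_0)\equiv g/2$ for $\ell=5$ (each block adds the even number $2\cdot5^{j-1}$, forcing $g$ even), hence extendability iff $g\equiv 0\pmod 4$ --- which, combined with the absence of genus-$0$-quotient order-$5$ maps for odd $g$, is exactly $g\neq 4k+2$.

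The main obstacle is the self-dual-block computation and its bookkeeping: determining the $\FF_2[\ZZ/\ell^m]$--module structure of the homology of a cyclic branched cover, making the mod-$\ell$ sign argument rigorous at every prime-power level, and converting the block counts into congruences on $g$ via Riemann--Hurwitz. Sharpness --- that the conditions on $g$ in (i) and (ii) cannot be dropped --- I would obtain by running the same formula in reverse: for $g$ odd with $\ell=3$ (respectively $g\equiv 2\pmod 4$ with $\ell=5$) I would exhibit an explicit $\ZZ/\ell$ branched cover of $S^2$ whose homology carries an odd number of self-dual blocks, so that $\Arf(q_0)=1$ and the resulting periodic map is not extendable.
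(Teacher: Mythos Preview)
Your argument is correct, but it is quite different from the route the paper takes, and it is worth recording the contrast.

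The paper's proof is a one-line counting trick. It never analyses the $\FF_2[\ZZ/p]$-module $H_1(F_g;\FF_2)$ at all: it simply lets the cyclic $p$-group $\langle f\rangle$ act on the set $\mathcal B_g$ of bounded spin structures, notes that $|\mathcal B_g|=2^{2g-1}+2^{g-1}$ by Lemma~\ref{cardinality}, and observes that if $p\nmid|\mathcal B_g|$ then some orbit has size $1$, i.e.\ a bounded $f$-invariant spin structure exists (Proposition~\ref{B-U}). Cases (i) and (ii) then reduce to the elementary congruences $3\nmid 2^{2g-1}+2^{g-1}$ for $g$ even and $5\nmid 2^{2g-1}+2^{g-1}$ for $g\not\equiv 2\pmod 4$, and case (iii) to the quadratic-residue argument of Lemma~\ref{many primes} showing that no prime $p\equiv 7\pmod 8$ ever divides $2^g+1$. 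Sharpness is read off directly from Theorem~\ref{4g+2,4g}: the order-$3$ map $w_1^2$ on $F_1$ and the order-$5$ map $w_2^2$ on $F_2$ are already known not to be extendable.

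Your approach replaces this global count by a structural computation: you split off the case $V^{f_*}\neq 0$ (where the Arf invariant can be adjusted freely on the trivial isotypic piece), reduce to branched covers of $S^2$, and then compute $\Arf(q_0)$ block-by-block using the Gauss-sum identity $\sum_{w\in W}(-1)^{q_0(w)}=(-1)^{\Arf}2^{d/2}$ together with $2^{d/2}\equiv -1\pmod\ell$ on a self-dual irreducible. This is heavier machinery, but it buys you more: you obtain a complete description of \emph{which} prime-power periodic maps fail to be extendable (exactly those with genus-$0$ quotient and an odd number of self-dual blocks), whereas the paper's orbit count only yields existence of a fixed point and says nothing when $p\mid|\mathcal B_g|$. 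It is also pleasant that both routes single out the same class of ``good'' primes for (iii), namely those with $\mathrm{ord}_\ell(2)$ odd --- your Mersenne-factor description and the paper's $p\equiv 7\pmod 8$ are two different infinite subfamilies of that class. One small caution: your remark that ``odd order mod $\ell$ forces odd order mod $\ell^j$'' is correct, but only because $\mathrm{ord}_{\ell^j}(2)=\mathrm{ord}_\ell(2)\cdot\ell^{e}$ for some $e\ge 0$ and $\ell$ is odd; this deserves a sentence, since the step from $\ell$ to $\ell^j$ is exactly where a naive argument could slip.
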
  
 
\begin{remark} There are two topological versions of extending  surface automorphisms over the 4-sphere:
The first one  allows those embeddings $F_g \to S^4$ non-smooth (topological embeddings), and 
the second one allows those extensions over $S^4$ to be homeomorphisms (topological extensions).

(1) By comparing Theorem \ref{group-ext} (1) and Theorem \ref{4g+2,4g} (1), the results about smooth embedding and non-smooth
embedding are quite different. Note that in Theorem \ref{group-ext} (1),  the non-smooth embedding   is piecewise smooth, and  the element extending $w_g$ is smooth, indeed in $SO(4)$.  

(2) On the other hand, assume that the embeddings $F_g\to S^4$ are smooth, then the results for smooth extensions and topological extensions are the same, in particular Theorem \ref{4g+2,4g} still holds for topological extensions. This follows from Theorem \ref{criterion}.

(3) To compare with the result in $S^3$ \cite{NWW}, Theorem \ref{group-ext} (2)  and Theorem \ref{4g+2,4g} (1) provide  periodic maps on surfaces which are extendable over $S^4$ for some smooth embedding, but not 
periodically extendable  over $S^4$ for any smooth embeddings.

(4) For any $g>1$, there exists an $f_g\in Aut(F_g)$ which is not extendable   over $\RR^4$ for any embedding $e$ by \cite{DLWY}.
Explicit $f_g$ is unknown before.
With Theorem \ref{4g+2,4g}, we wonder if such $f_g$ can be a torsion element for each $g$.



\end{remark}

\begin{remark} 
A result of Mostow, also Palais,  claims that any compact Lie group action on $M^m$ can be embedded to the symmetry of  $\RR^n$ for some $n$, \cite{Mos}, \cite{Pal}. 
One may ask:  

\begin{question}
For a given finite order map $f\in Aut(F_g)$,  what are possibly small $n$ so that $f$ is periodically extendable over
$S^n$ (or $\RR^n$)?
\end{question}

Some explicit bounds of above question is given in \cite{WaZ}.
\end{remark}

The organization of the paper is reflected in the table of content. The proof of Theorem \ref{group-ext} given in Section 4 can be read directly.

{\bf Acknowledgement:} The paper is revised based  the referee's advise and comments: 
We make a  general setting to extend torsion elements of ${\mathcal{M}}(F_g)$ over $S^4$ rather than just to work on Wiman maps in the old version, and  we  add some discussion of topological extension. Toward  those directions, we state explicitly an extending criterion (Theorem \ref{criterion}), and get some new results (Theorem \ref{4g+2,4g} (2) and Theorem \ref{3-5-7}).

We thank the referee for the insightful advise and comments. 
We thank Fan Ding, Yi Liu and Chao Wang 
for mathematics they taught us.  We thank Hongbin Sun for Lemma \ref{many primes}.

\section{Spin structures}

All facts related spin structures below  can be found in pages 33-37 and
page 102 \cite{Ki},  \cite{Roh}, and \cite{DLWY}.


Spin structures of a rank $n$ vector bundle $\xi$ over a CW complex
$X$ can phrased with trivialization. $\xi$ can be
endowed with a spin structure if $\xi\oplus\epsilon^k$ has a
trivialization over the $1$-skeleton of $X$ which may extend over the
$2$-skeleton of $X$, (if $n\geq 3$, $k=0$; if $n=2$, $k=1$; if $n=1$,
$k=2$), and a spin structure is a homotopy class of such
trivializations. 
A spin structure of a smooth
manifold $M$ is a spin structure of its tangent bundle with respect to some (hence any) CW complex structure of $M$, which exists
if and only $M$ is orientable and the second Stiefel-Whitney class $w_2(M)=0$.
Two spin structures $\sigma_1,\sigma_0$ of $\xi$ are equivalent
if and only if the trivialization $\sigma_1|_\alpha\simeq \sigma_0|_\alpha$ for
any closed path $\alpha$ on $M$. 
Two facts below are basic for us:

(i) The circle $S^1$ has two spin structures: the bounded
spin structure, namely which bounds  a spin structure on a surface $F$, or equivalently, the corresponding element in $\pi_1(SO(n))=\ZZ_2$ is trivial for some $n\ge 3$; 
and the Lie group spin structure, which is unbounded, or equivalently, the corresponding element in $\pi_1(SO(n))$ is non-trivial.

(ii) If $\xi=\xi_1\oplus \xi_2$ for bundles over $X$, then
spin structures on two of the bundles determine a spin structure of
the third (cf. \cite[p. 37]{Ki}).

For a spin manifold $M$ with boundary $\partial M$, $\partial M$ has a natural spin structure induced from the spin structure of $M$ and the (inward) normal vector of $\partial M$ in $M$ by (ii). A spin manifold is said to be spin bounded 
if there is a spin manifolds bounded by it, inducing its spin structure. 

\subsection{Spin structure on $F_g$} 
Let $\mathcal{S}(F_g)$ be the space of spin structures on $F_g$. It is known that spin structure on $F_g$ are 1-1 corresponding to elements in
$H^1(F_g,\ZZ_2)$, therefore there are $2^{2g}$ spin structures on $F_g$.

There is a surjective map:
$$\mathcal{S}(F_g)\stackrel{[.]}\longrightarrow \Omega^\Spin_2\stackrel\Arf\longrightarrow\ZZ_2,$$
where $\Omega^\Spin_2$ is the second spin cobordism group and $\Arf$ is the Arf isomorphism. More precisely,
for any $\sigma\in\mathcal{S}(F_g)$, there is an associated nonsingular quadratic function $q_\sigma:H_1(F_g;\ZZ_2)\to\ZZ_2$,
such that $q_\sigma(\alpha)=0$ (resp. $1$) if the spin
structure on $F_g$ restricted to the bounding (resp. Lie-group) spin structure on $\alpha$. Note
$$q_\sigma(\alpha+\beta)=q_\sigma(\alpha)+q_\sigma(\beta)+\alpha\cdot\beta \qquad (S1),$$
 where $\alpha\cdot\beta$ is the $\ZZ_2$-intersection number, and $\sigma=\sigma'$ if and only if $q_{\sigma}=q_{\sigma'}$. 
 Thus $\Arf([\sigma])$ is defined as the Arf invariant of the nonsingular quadratic form $q_\sigma$. 
 
Let $\mathcal{B}_g$ be the union of  bounding ($\Arf([\sigma])=0$)  spin structures
and $\mathcal{U}_g$ be the union of  unbounding ($\Arf([\sigma])=1$)  spin structures. Then $\mathcal{S}(F_g)$ is a disjoint union:
$$\mathcal{S}(F_g)=\mathcal{B}_g\sqcup\mathcal{U}_g.$$

\begin{lemma}  \label{BU}\cite[page 102]{Ki} Suppose $q_\sigma\in \mathcal S_g$. Then $q_\sigma\in \mathcal{B}_g$  (resp. $q_\sigma\in \mathcal{U}_g$) if and only if $q$ vanishes on exactly $2^{2g-1}+2^{g-1}$ (resp. $2^{2g-1}-2^{g-1}$) elements.
\end{lemma}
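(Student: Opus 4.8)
The plan is to reduce the counting problem to the evaluation of a single Gauss sum over $V=H_1(F_g;\ZZ_2)$, which is an $\FF_2$-vector space of dimension $2g$ equipped with the nonsingular alternating intersection pairing. Writing $N_0$ and $N_1$ for the number of $\alpha\in V$ with $q_\sigma(\alpha)=0$ and $q_\sigma(\alpha)=1$ respectively, I have $N_0+N_1=2^{2g}$, so it suffices to determine the signed count
$$S:=\sum_{\alpha\in V}(-1)^{q_\sigma(\alpha)}=N_0-N_1.$$
Once $S$ is known, the two equations $N_0+N_1=2^{2g}$ and $N_0-N_1=S$ solve immediately for $N_0$, and the asserted values $2^{2g-1}\pm 2^{g-1}$ appear exactly when $S=\pm 2^g$.

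First I would fix a symplectic basis $a_1,b_1,\dots,a_g,b_g$ of $V$, so that $a_i\cdot b_i=1$ and all other pairings among basis vectors vanish; such a basis exists because the intersection pairing is nonsingular and alternating over $\FF_2$ (the self-intersections $\alpha\cdot\alpha$ vanish automatically). This decomposes $V$ as an orthogonal direct sum of $g$ hyperbolic planes $P_i=\langle a_i,b_i\rangle$. The key point is that $(S1)$ together with the orthogonality $P_i\perp P_j$ for $i\ne j$ forces $q_\sigma$ to be additive across the decomposition: if $\alpha=\sum_i v_i$ with $v_i\in P_i$, then all cross terms $v_i\cdot v_j$ vanish and $q_\sigma(\alpha)=\sum_i q_\sigma(v_i)$. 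Consequently the Gauss sum factorizes,
$$S=\prod_{i=1}^g S_i,\qquad S_i:=\sum_{v\in P_i}(-1)^{q_\sigma(v)}.$$

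Each factor $S_i$ I would compute by direct enumeration of the four elements $0,\,a_i,\,b_i,\,a_i+b_i$ of $P_i$. Using $(S1)$ in the form $q_\sigma(a_i+b_i)=q_\sigma(a_i)+q_\sigma(b_i)+1$, a short case check on the values of $q_\sigma(a_i)$ and $q_\sigma(b_i)$ gives $S_i=2$ whenever $q_\sigma(a_i)q_\sigma(b_i)=0$ and $S_i=-2$ whenever $q_\sigma(a_i)q_\sigma(b_i)=1$; that is, $S_i=2(-1)^{q_\sigma(a_i)q_\sigma(b_i)}$. Taking the product yields
$$S=2^g(-1)^{\sum_{i=1}^g q_\sigma(a_i)q_\sigma(b_i)}=2^g(-1)^{\Arf([\sigma])},$$
where the last equality is the standard formula expressing the Arf invariant of $q_\sigma$ in terms of a symplectic basis. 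Hence $S=2^g$ when $q_\sigma\in\mathcal{B}_g$ and $S=-2^g$ when $q_\sigma\in\mathcal{U}_g$, and substituting into the linear system gives $N_0=2^{2g-1}+2^{g-1}$ and $N_0=2^{2g-1}-2^{g-1}$ respectively, as claimed.

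The main obstacle is the factorization of the Gauss sum: it rests on the existence of the orthogonal hyperbolic decomposition and on verifying that $q_\sigma$ splits additively over it, which is where $(S1)$ and the orthogonality are used in tandem. A secondary point needing care is that the formula $\Arf([\sigma])=\sum_i q_\sigma(a_i)q_\sigma(b_i)$ be independent of the chosen symplectic basis; I would either invoke this as the standard well-definedness of the Arf invariant, or observe that since $S$ is manifestly basis-free, the computation above simultaneously reproves that the parity $\sum_i q_\sigma(a_i)q_\sigma(b_i)$ is an invariant of $q_\sigma$.
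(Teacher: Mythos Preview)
Your argument is correct: the Gauss-sum factorization over a symplectic basis is the standard route to this count, and your case check on each hyperbolic plane and the resulting formula $S=2^g(-1)^{\Arf([\sigma])}$ are accurate. Note, however, that the paper does not supply its own proof of this lemma; it is quoted from \cite[page 102]{Ki}, so there is no in-paper argument to compare against. What you have written is essentially the classical proof one finds in Kirby's book and elsewhere, so nothing further is needed.
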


For any $f\in Aut(F_g)$, we have induced map $$f_*: H_1(F_g, \ZZ_2)\to H_1(F_g, \ZZ_2),$$  Furthermore we have
the map 
$$f^*: \mathcal{S}(F_g) \to \mathcal{S}(F_g)$$
given by 
$$f^*(\sigma)(x)=\sigma(f_*(x)) \qquad (S2)$$
for any $\sigma \in \mathcal{S}(F_g)$ and $x\in H_1(F_g, \ZZ_2)$, and 
for any $f,g \in Aut (F_g)$, 
$$(f\circ g)^*= g^*\circ f^* \qquad (S3).$$

Clearly (S2) defines the action  of $Aut (F_g)$ on $\mathcal{S}(F_g)$, which induces an actions of $\mathcal{M}(F_g)$
on $\mathcal{S}(F_g)$.

\begin{lemma}\label{trans}  \cite{Jo} \cite{DLWY} $\mathcal{M}(F_g)$ acts transitively on
$\mathcal{B}_g$ and on $\mathcal{U}_g$.
\end{lemma}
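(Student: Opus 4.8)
The plan is to reduce the action on spin structures to a linear-algebra statement about quadratic forms over $\FF_2$ and then to apply the classification of such forms.

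First I would translate the action into linear algebra. By (S2)--(S3), the action of $f\in Aut(F_g)$ on a spin structure depends only on $f_*$ acting on $H_1(F_g;\ZZ_2)$, via $q_{f^*\sigma}=q_\sigma\circ f_*$. Each $f_*$ preserves the $\ZZ_2$-intersection form $\alpha\cdot\beta$, so it lies in $\mathrm{Sp}(2g,\FF_2)$. Since the symplectic representation $\mathcal{M}(F_g)\to \mathrm{Sp}(2g,\ZZ)$ is surjective (Dehn twists map to transvections, which generate), and $\mathrm{Sp}(2g,\ZZ)\to \mathrm{Sp}(2g,\FF_2)$ is onto, the induced map $\mathcal{M}(F_g)\to \mathrm{Sp}(2g,\FF_2)$ is surjective. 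Therefore the orbits of $\mathcal{M}(F_g)$ on $\mathcal{S}(F_g)$ coincide with the orbits of $\mathrm{Sp}(2g,\FF_2)$ on the set of quadratic refinements of $\alpha\cdot\beta$, where $\phi$ acts by $q\mapsto q\circ\phi$. It thus suffices to show that $\mathrm{Sp}(2g,\FF_2)$ acts transitively on the refinements of each fixed Arf invariant.

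Next I would establish this transitivity via normal forms. Fix a symplectic basis $a_1,b_1,\dots,a_g,b_g$ of $H_1(F_g;\FF_2)$, so by (S1) a refinement $q$ is determined by the values $q(a_i),q(b_i)$, and $\Arf(q)=\sum_i q(a_i)q(b_i)$. By the classical classification of quadratic forms over $\FF_2$ (Arf's theorem / Witt's extension theorem), a symplectic change of basis brings $q$ into one of exactly two standard shapes: all $q(a_i)=q(b_i)=0$ when $\Arf(q)=0$, and $q(a_1)=q(b_1)=1$ with the remaining values $0$ when $\Arf(q)=1$. Given $q_1,q_2$ with equal Arf invariant, choose $\psi_1,\psi_2\in \mathrm{Sp}(2g,\FF_2)$ bringing $q_1,q_2$ to the same standard form, so that $q_1\circ\psi_1=q_2\circ\psi_2$; then $q_1\circ(\psi_1\psi_2^{-1})=q_2$, and the two lie in one orbit. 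Translating back, this is precisely the assertion that $\mathcal{M}(F_g)$ acts transitively on $\mathcal{B}_g$ and on $\mathcal{U}_g$.

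As an alternative more in the spirit of \cite{Jo} and \cite{DLWY}, one can argue directly with Dehn twists. Since $(T_c)_*x=x+(x\cdot c)c$, formula (S1) gives that $T_c$ changes $q_\sigma$ by the linear functional $x\mapsto (x\cdot c)(q_\sigma(c)+1)$: it fixes $q_\sigma$ when $q_\sigma(c)=1$ and adds the Poincar\'e dual of $c$ when $q_\sigma(c)=0$. Two forms in the same Arf class differ by such a functional, and one checks that suitable curves with $q_\sigma(c)=0$ generate all the needed functionals, so a product of Dehn twists carries one to the other. In either approach the main obstacle is the same: proving that the Arf invariant is a \emph{complete} invariant of the $\mathrm{Sp}(2g,\FF_2)$-orbits, i.e.\ that the normal forms are attainable; once this is granted, the reduction and the bookkeeping are routine.
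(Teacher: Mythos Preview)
Your argument is correct: the reduction to the $\mathrm{Sp}(2g,\FF_2)$-action on quadratic refinements via the surjectivity of the symplectic representation, followed by Arf's classification, is the standard route, and your Dehn-twist computation $(T_c^*q_\sigma)(x)=q_\sigma(x)+(x\cdot c)(q_\sigma(c)+1)$ is exactly the formula underlying Johnson's approach in \cite{Jo}. The paper itself does not supply a proof of this lemma at all---it simply cites \cite{Jo} and \cite{DLWY}---so there is nothing to compare against beyond noting that your two sketches recover precisely what those references contain. The only place to tighten is the second sketch: once you apply one twist the form changes, so ``curves with $q_\sigma(c)=0$'' must be chosen with respect to the \emph{current} form at each step; this is handled in \cite{Jo} but is worth saying explicitly.
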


\begin{lemma}\label{cardinality}  \cite{DLWY} 
$|\mathcal{B}_g|= 2^{2g-1}+2^{g-1}$ and  $|\mathcal{U}_g|= 2^{2g-1}-2^{g-1}$.
\end{lemma}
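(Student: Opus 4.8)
The plan is to determine $N_0 = |\mathcal{B}_g|$ and $N_1 = |\mathcal{U}_g|$ by a double-counting argument applied to the incidence set
$$\mathcal{Z} = \{(\sigma, v) : \sigma \in \mathcal{S}(F_g),\ v \in H_1(F_g;\ZZ_2),\ q_\sigma(v) = 0\},$$
counting $|\mathcal{Z}|$ in two ways and combining the outcome with the trivial identity $N_0 + N_1 = |\mathcal{S}(F_g)| = 2^{2g}$. Throughout I use that $H_1(F_g;\ZZ_2) \cong \ZZ_2^{2g}$ carries the nondegenerate symplectic intersection form and that, by (S1), each $q_\sigma$ is a quadratic refinement of it. The difference of any two such refinements satisfies $(q+q')(\alpha+\beta) = (q+q')(\alpha) + (q+q')(\beta)$, hence is a linear functional; so $\mathcal{S}(F_g)$ is a torsor over $\mathrm{Hom}(H_1(F_g;\ZZ_2),\ZZ_2)$, of size $2^{2g}$, matching the identification with $H^1(F_g;\ZZ_2)$.

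First I would count $|\mathcal{Z}|$ by summing over spin structures. By Lemma \ref{BU}, each $\sigma \in \mathcal{B}_g$ contributes exactly $2^{2g-1}+2^{g-1}$ vectors $v$ with $q_\sigma(v)=0$, while each $\sigma \in \mathcal{U}_g$ contributes exactly $2^{2g-1}-2^{g-1}$, giving
$$|\mathcal{Z}| = N_0\,(2^{2g-1}+2^{g-1}) + N_1\,(2^{2g-1}-2^{g-1}).$$
Next I would count $|\mathcal{Z}|$ by summing over vectors $v$. For $v=0$ every $q_\sigma$ satisfies $q_\sigma(0)=0$, giving $2^{2g}$ pairs. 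For fixed $v \neq 0$, writing an arbitrary refinement as $q_0 + \ell$ with $\ell$ running over all linear functionals, the condition $q_\sigma(v) = q_0(v) + \ell(v) = 0$ selects exactly those $\ell$ whose value $\ell(v)$ is a prescribed element of $\ZZ_2$; since $v \neq 0$ this is a coset of a hyperplane, containing exactly $2^{2g-1}$ functionals. So each nonzero $v$ contributes $2^{2g-1}$ pairs, and
$$|\mathcal{Z}| = 2^{2g} + (2^{2g}-1)\,2^{2g-1}.$$

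Equating the two expressions and using $N_0 + N_1 = 2^{2g}$ collapses everything to the single relation $(N_0 - N_1)\,2^{g-1} = 2^{2g} - 2^{2g-1} = 2^{2g-1}$, that is $N_0 - N_1 = 2^{g}$; together with $N_0 + N_1 = 2^{2g}$ this yields $N_0 = 2^{2g-1}+2^{g-1}$ and $N_1 = 2^{2g-1}-2^{g-1}$, as claimed. The only substantive input is Lemma \ref{BU} (the zero-count for a single form); granting that, the remainder is pure linear algebra and I anticipate no obstacle — the mild subtlety is simply noticing that the ``half-vanishing'' count for each $v\neq 0$ is uniform, independent of $q_0(v)$, because $\ell(v)$ takes each value in $\ZZ_2$ equally often.

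An alternative self-contained route avoids Lemma \ref{BU} entirely: fix a symplectic basis $a_1,b_1,\dots,a_g,b_g$, use $\Arf(q_\sigma)=\sum_{i} q_\sigma(a_i)q_\sigma(b_i)$, and count the $2g$-tuples of values $(q_\sigma(a_i),q_\sigma(b_i))$ directly through the recursion $p_g = 3p_{g-1}+q_{g-1}$, $q_g = p_{g-1}+3q_{g-1}$ with $p_1=3,\,q_1=1$, whence $p_g - q_g = 2^{g}$ and $p_g + q_g = 2^{2g}$. In this variant the one point genuinely needing justification is the basis-independence of the Arf invariant, which is where I would expect the real work to sit.
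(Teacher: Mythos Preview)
Your argument is correct. Note, however, that the paper does not give its own proof of this lemma: it simply records the statement with a citation to \cite{DLWY}. So there is no in-paper argument to compare against.

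That said, your double-counting route is a pleasant observation: it converts Lemma~\ref{BU} (the zero-count for a single quadratic form) into Lemma~\ref{cardinality} (the count of bounding forms) by exploiting the torsor structure of $\mathcal{S}(F_g)$ over $H^1(F_g;\ZZ_2)$. The two lemmas are in fact dual to one another under this incidence count, which is why the same number $2^{2g-1}+2^{g-1}$ appears in both. Your alternative route via a symplectic basis and the recursion $p_g=3p_{g-1}+q_{g-1}$, $q_g=p_{g-1}+3q_{g-1}$ is the more standard textbook computation and is closer to what one typically finds in the literature; it has the advantage of being self-contained, proving Lemmas~\ref{BU} and~\ref{cardinality} simultaneously, whereas your first argument must assume one to derive the other. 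Either way the result follows.
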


\subsection{Induced Rohlin forms and induced spin structures}

Now we fix a point $x\in S^4$ and identify $\RR^4$ with $S^4\setminus x$.
Suppose $e:  F_g \hookrightarrow\RR^{4}=S^4\setminus x\subset S^4$ is a smooth embedding. 
We denote $e(F_g)$ by $F$. 

We first recall Rohlin form \cite{Roh}.
The Rohlin 
form $q_e: H_1(F_g;\ZZ_2)\to \ZZ_2$  for the embedding $e$ is defined so that for any smoothly embedded
subsurface $P\subset S^4$ with $\partial P\subset F$,
$\mathring{P}\subset S^4\setminus F$ and
 transverse to $F$ along $\partial P$, $q_e ([\partial P])$ is the $\bmod\,2$ number
 of points in $P\cap P'$, where $P'$ is a smooth perturbed copy of $P$ so that $\partial P'\subset F$
 is disjoint parallel to $\partial P$, and that $\mathring{P}'$ is transverse to $\mathring{P}$ . It is known that 
  $q_e$ is a bounded spin structure \cite{Roh}.
  
A handlebody $H_g$ is obtained by attaching $g$ 3-dimensional 1-handles to the 3-ball. 
Call a smooth embedding of $e: F_g\to S^4$ is trivial, if $F=e(F_g)$  bounds a handlebody $H_g$ in $S^4$.   The image of trivial embedding $F \subset S^4$ is unique up to equivalence. 

\begin{theorem}\label{spin-obstruction2} (\cite{Hi}, \text{cf. also \cite{Mon} for $g=1$})
For a trivial embedding  $e: F_g\to S^{4}$ and $f\in Aut(F_g)$,  $f$ is extendable over $S^4$ if and only if $f$ preserves the induced  Rohlin form $q_e$.
\end{theorem}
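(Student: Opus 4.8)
The plan is to prove the two implications separately, the forward one being essentially formal and the converse carrying all the content.

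\emph{The easy direction ($\Rightarrow$).} Suppose $\tilde f\in Aut(S^4)$ extends $f$, so that $\tilde f\circ e=e\circ f$ and hence $\tilde f(F)=F$ with $\tilde f|_F$ realizing $f$. The Rohlin form $q_e$ is defined purely from the smooth embedded pair $(S^4,F)$ via mod $2$ intersection numbers of spanning subsurfaces $P$ with $\partial P\subset F$. Since $\tilde f$ is a diffeomorphism of $S^4$ taking $F$ to itself by $f$, it sends each spanning surface $P$ for $[\partial P]$ to a spanning surface $\tilde f(P)$ for $f_*[\partial P]$ and preserves all of the transverse intersection counts defining $q_e$. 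Hence $q_e(f_*\alpha)=q_e(\alpha)$ for every $\alpha\in H_1(F_g;\ZZ_2)$, i.e. $f^*q_e=q_e$. (This is the $S^4$ instance of the general principle that extendable maps preserve the induced spin structure, cf.\ \cite{DLWY} together with Lemma \ref{Rohlin}.)

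\emph{The hard direction ($\Leftarrow$): reduction to generators.} Here I would show that the group $$G_{q_e}=\{f\in\mathcal M(F_g): f^*q_e=q_e\}$$ is generated by classes I can extend explicitly. The action formula (S1) pins down which Dehn twists qualify: since $(T_c)_*x=x+(x\cdot c)c$, we get $q_e((T_c)_*x)=q_e(x)+(x\cdot c)\,(q_e([c])+1)$, which equals $q_e(x)$ for all $x$ exactly when $q_e([c])=1$. So the relevant twists are those about Lie-group (non-bounding) curves. The structural input I would invoke is the known generation statement that $G_{q_e}$ is generated by such twists $T_c$ with $q_e([c])=1$, together with the Torelli classes carried by separating curves and bounding pairs; transitivity of $\mathcal M(F_g)$ on each of $\mathcal B_g,\mathcal U_g$ (Lemma \ref{trans}) lets me first normalize $q_e$ to a convenient model form.

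\emph{Extending the generators.} For a curve $c$ with $q_e([c])=1$ I would realize $T_c$ geometrically in the standard model $F=\partial H_g$ of the trivial embedding: $c$ bounds a disk $D$ pushed off $F$ into $S^4$ whose normal framing relative to $F$ is exactly the datum recorded by $q_e([c])$, and $T_c$ is produced by a rotation of $S^4$ supported in a $D^2\times D^2$ neighborhood of $D$. The condition $q_e([c])=1$ is precisely the framing condition guaranteeing that this local rotation closes up to a diffeomorphism of $S^4$ restricting to $T_c$ on $F$. The separating and bounding-pair generators of the Torelli part are treated analogously, since their defining curves bound subsurfaces in $S^4$ across which the twisting can be absorbed.

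\emph{Main obstacle.} The crux is this converse direction, and within it step (b) below rather than step (a): one must (a) have in hand the generation of $G_{q_e}$ by Lie-group Dehn twists and Torelli elements, and (b) carry out the explicit four-dimensional extension of each generator, checking that the framing bookkeeping of the spinning construction matches the algebraic condition $q_e([c])=1$. The framing analysis in (b) is where the genuine work lies; by contrast the forward implication and the algebraic identification of the image of $G_{q_e}$ with the orthogonal group $O(q_e)$ inside $\mathrm{Sp}(2g,\ZZ_2)$ are comparatively routine.
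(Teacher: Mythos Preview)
The paper does not give its own proof of this theorem; it is quoted as a result of Hirose (and Montesinos for $g=1$) and then used as a black box in the proof of Theorem~\ref{criterion}. So there is no in-paper argument to compare against, and your proposal should be read as an outline of the argument one would find in \cite{Hi}.

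As such an outline, your strategy is the right one and is indeed Hirose's: the forward direction is formal, and for the converse one identifies a generating set for the stabilizer $G_{q_e}$ of the quadratic form and extends each generator by an explicit $4$--dimensional construction. Your Dehn--twist computation $q_e((T_c)_*x)=q_e(x)+(x\cdot c)(q_e([c])+1)$ is correct, and the resulting criterion $q_e([c])=1$ is exactly the one Hirose uses. Two points deserve tightening. First, the generation statement you ``invoke'' is the substantive algebraic input: one needs that $G_{q_e}$ is generated by Dehn twists about curves $c$ with $q_e([c])=1$ (this already contains the Torelli part, so your separate treatment of separating curves and bounding pairs is redundant once that theorem is in hand). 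Second, your description of the extension step is too optimistic as stated: a nonseparating curve $c$ with $q_e([c])=1$ does not in general bound an embedded disk meeting $F$ only in $c$, and the local ``rotation in a $D^2\times D^2$'' picture only applies after one has arranged $c$ into a standard position relative to the handlebody $H_g$. Hirose's actual construction first normalizes the curve system and then performs a specific isotopy/spinning move on the trivially embedded $F$; the framing condition $q_e([c])=1$ enters exactly where you say it does, but the geometric model is more delicate than ``$c$ bounds a disk pushed off $F$''. Your identification of step~(b) as the crux is accurate; just be aware that carrying it out is the content of \cite{Hi}, not a routine verification.
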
 



Next we recall the constructions of induced spin structure \cite{DLWY}.
For each smooth embedding $F\subset \RR^4$,
two classical facts are: (i) the normal bundle $N(F)$ of $F$ in
$\RR^{4}$ is trivial; (ii)  there exists
a compact connected oriented $3$-dimensional smooth submanifold  $\Sigma\subset\RR^{4}$
such that $\partial\Sigma=F$. Let $W$ be an inward normal vector field of $F$ in $\Sigma$, 
and $H$ be a normal vector field of $\Sigma$ in $\RR^{4}$ over $F$, such that the orientation $(W,H)$ of the normal bundle $N(F)$ and the orientation
of $F$ match up to that the canonical orientation of $\RR^{4}$. 



The trivialization
$(W,H)$ of $N(F)$ defines a spin structure $\sigma'$ of $N(F)$,
and the canonical spin structure $\varsigma^{4}$ of $\RR^{4}$ restricts to a spin structure
on $T\RR^{4}|_{F}$. As $T\RR^{4}|_{F}=TF\oplus N(F)$,
we conclude there is a spin structure $\sigma$ of $TF$ such that $\sigma\oplus\sigma'=\varsigma^{4}$ by (ii).

For any loop $\alpha$ on $F$, when pushed into the interior of $\Sigma$ along $W$, becomes null-homologous in $\RR^4\setminus F$.  
 Then one can verify that the spin structure $\sigma$ on $F$ is independent of the choice of $\Sigma$ and $(W,H)$. 
Hence for the embedding $F \hookrightarrow\RR^{4}$, we define the \emph{induced spin structure} as:
$$e^*(\varsigma^{4})=\sigma,$$
where $\sigma$ is as described above.  Moreover one can check that $\sigma$  bounds the  spin structure on $\Sigma$ induced from
$H$ and $\varsigma^{4}$. 

\begin{theorem}\label{spin-obstruction1} \cite{DLWY}
For each smooth embedding $e: F_g\to \RR^{4}$, 

(1) there is an induced spin structure $\sigma=e^*(\varsigma^{4})$ on $F_g$ from the
embedding, which is bounded;

(2) for any $f\in Aut(F_g)$, if  $f$ is topologically extendable over $\RR^{4}$ for $e$, then $f^*(\sigma)=\sigma$.
\end{theorem}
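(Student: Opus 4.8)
The plan is to treat the two parts separately, using throughout the equivalence criterion recalled above: two spin structures on $F_g$ agree if and only if they restrict to the same spin structure on every embedded loop. Thus everything reduces to computing, for each loop $\alpha\subset F$, the element of $\pi_1(SO(n))=\ZZ_2$ that the chosen trivialization determines along $\alpha$.

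For part (1), the spin structure $\sigma$ is defined by the two-out-of-three rule (ii) applied to $\varsigma^{4}|_F$ on $T\RR^4|_F=TF\oplus N(F)$ and to the spin structure $\sigma'$ of $N(F)$ carried by the frame $(W,H)$; the content is that this is independent of the auxiliary choices. First I would fix $\Sigma$ and vary $(W,H)$: the inward normal $W$ of $F$ in $\Sigma$ is unique up to a contractible choice, and once orientations are fixed $H$ is determined up to homotopy as the positively oriented normal of the trivial line bundle $N(\Sigma)|_F$, so $\sigma'$, and hence $\sigma$, is unchanged. Next I would vary $\Sigma$; here the decisive input is the stated fact that for any such $\Sigma$ a loop $\alpha\subset F$ pushed into $\mathrm{int}\,\Sigma$ along $W$ becomes null-homologous in $\RR^4\setminus F$. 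Since this null-homology records only linking in the complement, it is intrinsic to the embedding $e$, so the framing that $(W,H)$ assigns along each $\alpha$, read in $\pi_1(SO(n))$, depends on $e$ alone and not on $\Sigma$; hence $\sigma$ is well defined. Boundedness is then immediate from the construction: $\sigma$ bounds the spin structure induced by $H$ and $\varsigma^{4}$ on the spin $3$-manifold $\Sigma$ with $\partial\Sigma=F$, so $(F,\sigma)$ is null-cobordant in $\Omega^\Spin_2$ and $\Arf[\sigma]=0$, i.e. $\sigma\in\mathcal B_g$.

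For part (2), I want $f^*(\sigma)=\sigma$, which by (S2) is exactly $q_\sigma(f_*\alpha)=q_\sigma(\alpha)$ for all $\alpha\in H_1(F_g;\ZZ_2)$. The strategy is to express $q_\sigma(\alpha)$ purely in terms of the embedding and mod-$2$ linking data in the complement: pushing $\alpha$ off $F$ along $W$ yields a loop that is null-homologous in $\RR^4\setminus F$, hence bounds there, and $q_\sigma(\alpha)$ is the resulting mod-$2$ self-linking/framing obstruction, which coincides with the Rohlin form $q_e$. A topological extension is a homeomorphism $\tilde f\co\RR^4\to\RR^4$ with $\tilde f\circ e=e\circ f$; it is orientation preserving, sends $F$ to $F$ and $\RR^4\setminus F$ to itself, and therefore preserves the homologically defined mod-$2$ linking pairing of the complement. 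Consequently $\tilde f$ carries the configuration computing $q_\sigma(\alpha)$ to the one computing $q_\sigma(f_*\alpha)$, yielding the desired equality. The main obstacle is precisely this homeomorphism (rather than diffeomorphism) hypothesis: the induced spin structure is a smooth object, so to see that a mere homeomorphism preserves it I must first reformulate $q_\sigma$ in a manifestly topological way, through mod-$2$ linking numbers in $\RR^4\setminus F$ (homeomorphism invariants via Alexander duality) rather than through smooth framings or transverse intersection numbers, and then check that the pushoff data it depends on is genuinely transported by $\tilde f$ from $\alpha$ to $f_*\alpha$.
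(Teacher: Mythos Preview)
The paper does not prove this theorem; it is quoted from \cite{DLWY}. The paragraphs immediately preceding the statement sketch the construction and assert well-definedness (``one can verify that the spin structure $\sigma$ on $F$ is independent of the choice of $\Sigma$ and $(W,H)$'') and boundedness (``one can check that $\sigma$ bounds the spin structure on $\Sigma$ induced from $H$ and $\varsigma^4$''), and your plan for part~(1) tracks that sketch essentially verbatim. For part~(2) the paper offers nothing beyond the citation, so there is no in-paper argument to compare against.

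On the substance of your plan for~(2): you have correctly isolated the real difficulty, namely that $\tilde f$ is only a homeomorphism, so neither the frame $(W,H)$ nor a Seifert $3$-manifold $\Sigma$ can be transported directly. Your proposed fix---rewrite $q_\sigma$ as a mod-$2$ linking/self-linking invariant in $\RR^4\setminus F$ and invoke Alexander duality---is the right idea and is indeed what lies behind Lemma~\ref{Rohlin} and the argument in \cite{DLWY}. Two cautions. First, appealing to $q_\sigma=q_e$ is not yet a topological reformulation, since $q_e$ is itself defined by smooth transverse intersection of surfaces; what you actually need is that the mod-$2$ self-intersection of the bounding surface $P$ (relative to the framing determined by $F$) can be read off from cohomological pairings on the pair $(\RR^4,F)$, which a homeomorphism preserves. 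Second, your claim that the push-off of $\alpha$ along $W$ is null-homologous in $\RR^4\setminus F$ is the key geometric input, but to use it under a mere homeomorphism you should phrase it intrinsically (e.g.\ as the vanishing of the class of a specific push-off in $H_1(\RR^4\setminus F;\ZZ_2)$, determined by the normal bundle data of $F$), so that $\tilde f$ visibly carries the push-off of $\alpha$ to a curve in the correct homology class for $f_*\alpha$. Once those two points are nailed down, your outline goes through.
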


\section{An extending criterion}

\begin{theorem}\label{criterion} 
Suppose $f\in Aut(F_g)$. Then the following are equivalent:

(1) There is a bounded  $f$-invariant spin structure $\sigma$ on $F_g$.

(2) $f$ is topologically extendable over $S^4$ for  some smooth embedding  $e: F_g\to S^{4}$,

(3) $f$ is extendable over $S^4$ for  some trivial embedding  $e: F_g\to S^{4}$.
 
 \end{theorem}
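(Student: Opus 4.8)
The plan is to prove the cycle of implications $(1)\Rightarrow(3)\Rightarrow(2)\Rightarrow(1)$, of which only the first is substantial; it converts the spin-theoretic hypothesis into Hirose's geometric criterion.

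For $(1)\Rightarrow(3)$, the idea is to realize the given bounded $f$-invariant spin structure $\sigma$ as the induced Rohlin form of a suitable \emph{trivial} embedding and then invoke Theorem~\ref{spin-obstruction2}. First I would fix a standard trivial embedding $e_0\colon F_g\to S^4$, whose image $F_0=e_0(F_g)$ bounds the standard handlebody $H_g$. Its induced Rohlin form $\sigma_0=q_{e_0}$ is bounded, so $\sigma_0\in\mathcal{B}_g$ (by the discussion preceding Theorem~\ref{spin-obstruction2}, or by Theorem~\ref{spin-obstruction1}(1) together with Lemma~\ref{Rohlin}). The key observation is a transformation law: pre-composing with a diffeomorphism acts on the Rohlin form through the action on $\mathcal{S}(F_g)$. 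Indeed, for $h\in Aut(F_g)$ and $e=e_0\circ h$, every loop $\alpha$ on $F_g$ has image $e_0(h(\alpha))$, so $q_e(\alpha)=q_{e_0}(h_*\alpha)=h^*(q_{e_0})(\alpha)$ by (S2); hence $q_{e_0\circ h}=h^*(\sigma_0)$.

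Now, since $\sigma\in\mathcal{B}_g$ by hypothesis and $\mathcal{M}(F_g)$ acts transitively on $\mathcal{B}_g$ (Lemma~\ref{trans}), there is a mapping class $h$ with $h^*(\sigma_0)=\sigma$. Taking $e=e_0\circ h$, the embedding $e$ is again trivial because its image equals $F_0$ (triviality depends only on the image), and by the transformation law its induced Rohlin form is $q_e=\sigma$. As $\sigma$ is $f$-invariant, $f$ preserves $q_e$, and Theorem~\ref{spin-obstruction2} produces a diffeomorphism of $S^4$ extending $f$, which is exactly assertion (3). The implication $(3)\Rightarrow(2)$ is then immediate: a trivial embedding is smooth, and a diffeomorphism extension is in particular a homeomorphism extension. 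For $(2)\Rightarrow(1)$, I would show that topological extendability forces $f$-invariance of the (bounded) induced spin structure. Given a smooth $e$ with image $F$ and an orientation-preserving homeomorphism $\tilde f$ of $S^4$ with $\tilde f\circ e=e\circ f$, choose $x\notin F$ and identify $\RR^4=S^4\setminus x$, so that $\sigma=e^*(\varsigma^4)$ is defined and bounded by Theorem~\ref{spin-obstruction1}(1). Since $\sigma=q_e$ (Lemma~\ref{Rohlin}) and the Rohlin form is built intrinsically from the pair $(S^4,F)$, it is preserved by the self-homeomorphism $\tilde f$ of $S^4$ carrying $F$ to itself: pushing the subsurfaces defining $q_e$ through $\tilde f$ gives $q_e(f_*\alpha)=q_e(\alpha)$ for all $\alpha$, i.e. $f^*(\sigma)=\sigma$. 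This is the $S^4$-analogue of Theorem~\ref{spin-obstruction1}(2), and it exhibits the bounded $f$-invariant spin structure required in (1).

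The hard part will be the realization step in $(1)\Rightarrow(3)$. What makes it work is the combination of three facts: that the Rohlin form of the standard trivial embedding already lies in $\mathcal{B}_g$; that reparametrizing a trivial embedding by a diffeomorphism of $F_g$ keeps it trivial (so Hirose's criterion, which applies only to trivial embeddings, stays available); and that the transformation law $q_{e_0\circ h}=h^*(q_{e_0})$ couples the algebra of the $\mathcal{M}(F_g)$-action on spin structures to the geometry of embeddings, letting transitivity on $\mathcal{B}_g$ hit an arbitrary bounded $\sigma$. The passage from $\RR^4$ to $S^4$ in $(2)\Rightarrow(1)$ is routine, once one notes that $q_e$ is a homeomorphism invariant of $(S^4,F)$ and that $S^4$ carries a unique spin structure.
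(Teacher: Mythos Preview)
Your implications $(1)\Rightarrow(3)$ and $(3)\Rightarrow(2)$ match the paper's proof essentially verbatim: the paper also fixes a trivial embedding, observes that its induced spin structure is bounded, uses Lemma~\ref{trans} to transport it to the given $\sigma$ by precomposing with some $\phi\in\Aut(F_g)$, and then invokes Hirose's criterion (restated via Lemma~\ref{Rohlin} as Theorem~\ref{spin-obstruction3}). Your transformation law $q_{e_0\circ h}=h^*(q_{e_0})$ is exactly the computation $(e\circ\phi)^*(\varsigma^4)=\phi^*(e^*(\varsigma^4))$ the paper carries out.

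The gap is in your $(2)\Rightarrow(1)$. You assert that the Rohlin form is a homeomorphism invariant of the pair $(S^4,F)$, justified by ``pushing the subsurfaces defining $q_e$ through $\tilde f$''. But $q_e$, as defined here, uses \emph{smoothly} embedded test-surfaces $P$ and a smooth perturbation $P'$ meeting $P$ transversely; when $\tilde f$ is merely a homeomorphism, $\tilde f(P)$ need not be smooth and the mod~$2$ intersection count $|\tilde f(P)\cap\tilde f(P')|$ is not a priori defined, let alone equal to $|P\cap P'|$. So this step is not justified as written (it would require either topological transversality in dimension~$4$ or a homological reinterpretation of $q_e$, neither of which you supply). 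The paper sidesteps the issue: it first modifies $\tilde f$ so that it has a fixed point $x\in S^4\setminus e(F_g)$ (an easy isotopy supported in the connected complement $S^4\setminus F$ moves $\tilde f(x)$ back to $x$), so that $\tilde f$ restricts to a self-homeomorphism of $\RR^4=S^4\setminus x$, and then applies Theorem~\ref{spin-obstruction1}(2) as a black box. That result from \cite{DLWY} is precisely the statement that topological extendability over $\RR^4$ forces $f^*(\sigma)=\sigma$, so nothing about homeomorphism invariance of $q_e$ needs to be argued directly.
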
 

Let $q_\sigma$ be the corresponding quadratic form for the induced spin structure $e^*(\varsigma^4)=\sigma$. 
Lemma \ref{Rohlin} below had appeared in an early version  of \cite{DLWY}, and might be known  for experts.

\begin{lemma}\label{Rohlin} Suppose $e:  F_g \hookrightarrow\RR^{4}$ is a smooth embedding. 
The induced spin structure $q_\sigma $ coincides with the Rohlin form $q_e$.\end{lemma}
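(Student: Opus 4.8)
The plan is to prove that the two quadratic functions $q_\sigma$ and $q_e$ on $H_1(F_g;\ZZ_2)$ agree. Since both refine the mod-$2$ intersection form — $q_\sigma$ by the identity (S1) and $q_e$ by Rohlin's theorem — each is determined by its values on a basis of $H_1(F_g;\ZZ_2)$ together with the (common) intersection pairing; indeed, from $q(\alpha+\beta)=q(\alpha)+q(\beta)+\alpha\cdot\beta$ one recovers every value recursively from the basis values. Hence it suffices to verify $q_\sigma(\alpha)=q_e(\alpha)$ for every embedded simple closed curve $\alpha$ in a fixed symplectic basis. So I fix such an $\alpha$ and reinterpret both sides as one and the same mod-$2$ framing obstruction along $\alpha$.

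For the induced spin structure, I would restrict the Whitney-sum relation $\sigma\oplus\sigma'=\varsigma^4$ to $\alpha$. The canonical spin structure $\varsigma^4$ of $\RR^4$ is the bounding one (it extends over the disk that $\alpha$ bounds in $\RR^4$), so $\varsigma^4|_\alpha$ is trivial in $\pi_1(\SO(n))=\ZZ_2$. Because this $\ZZ_2$-defect is additive under direct sum, $q_\sigma(\alpha)$ — the defect of $\sigma$ along $\alpha$ on $TF$ — coincides with the defect of the normal trivialization $\sigma'=(W,H)$ of $N(F)$ along $\alpha$, measured against the framing that $\alpha$ inherits from bounding in $\RR^4$. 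Concretely, push $\alpha$ off $F$ into $\Sigma$ along $W$ to obtain $\alpha_W$; by the construction recalled before Theorem \ref{spin-obstruction1}, $\alpha_W$ is null-homologous in $\RR^4\setminus F$, and choosing a bounding surface for it together with the $(W,H)$-framing exhibits $q_\sigma(\alpha)$ as a self-intersection count modulo $2$.

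For the Rohlin form I would compute $q_e(\alpha)=P\cdot P'\pmod 2$ using a bounding surface $P$ for $\alpha$ whose boundary pushoff $\partial P'$ is taken to be the surface-framing copy $\alpha'\subset F$. Pushing $P$ off itself along the direction compatible with $W$ identifies the transverse intersections $P\cap P'$ with exactly the framing defect of the previous paragraph: each contributes the same local $\ZZ_2$ weight as the rotation of the $F$-framing relative to the bounding framing of $\alpha$. Matching the two counts then gives $q_e(\alpha)=q_\sigma(\alpha)$ on every basis curve, and hence $q_e=q_\sigma$ by the reduction in the first paragraph.

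The main obstacle is the framing bookkeeping in the second and third paragraphs: one must carefully align the three natural framings along $\alpha$ — the surface framing from $F$, the normal framing $(W,H)$ of $N(F)$, and the bounding (disk) framing in $\RR^4$ — and track the index/rotation correction that distinguishes the bounding from the Lie-group spin structure on the circle, so that the additive $\ZZ_2$-defect computed through $\sigma'$ matches the self-intersection number $P\cdot P'$ on the nose rather than up to a global constant. Verifying that this correction is consistent, and in particular that it introduces no overall shift, is the delicate point; once the local weights are matched, the remainder is formal.
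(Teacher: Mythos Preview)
Your outline is essentially the paper's own approach: restrict the splitting $TF\oplus N(F)=T\RR^4$ to a curve $\alpha$, use additivity of the $\ZZ_2$-defect under $\oplus$, and identify the normal-bundle defect with the self-intersection count $|P\cap P'|$. The reduction to basis curves is harmless but unnecessary; the paper works with an arbitrary $\partial P$ and shows directly that $q_\sigma(\partial P)=1\Leftrightarrow q_e(\partial P)=1$.

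The genuine gap is exactly the point you flag as ``delicate'': you have not actually carried out the framing bookkeeping, and without it one cannot rule out the global shift $q_\sigma=q_e+1$. The paper resolves this by writing down an explicit adapted frame $(U,V,W,H)$ along $\partial P$, with $U$ tangent to $\partial P$, $V$ normal to $\partial P$ in $F$, $W$ normal to $\partial P$ in $P$, and $H$ the ambient normal. The key observation that fixes the constant is that $(U,W)|_{\partial P}$ --- the tangent-plus-inward-normal framing of a boundary circle inside its own bounding surface $P$ --- is \emph{always} the Lie-group spin structure, independent of $P$. This single fact, together with additivity, forces
\[
(U,V,W,H)\simeq\varsigma^4|_{\partial P}\ \Longleftrightarrow\ (V,H)\text{ fails to extend over }N_{\RR^4}(P)\ \Longleftrightarrow\ |P\cap P'|\text{ is odd},
\]
while simultaneously $(U,V,W,H)\simeq\varsigma^4|_{\partial P}$ means the induced $\sigma|_{\partial P}$ is given by $(U,V)$, which is again Lie-group, i.e.\ $q_\sigma(\partial P)=1$. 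That is the missing ingredient in your sketch: rather than matching ``local weights'' abstractly, you need one concrete anchor (the $(U,W)$ framing is Lie-group) to pin down the absolute normalization.
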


\begin{proof} To verify this, consider a smoothly embedded surface $P\subset\RR^4$ as in the definition of $q_e$. Note $\partial P$ becomes null-homologous in $\RR^4\setminus F$ if we push $\partial P$ along the normal vector field of $\partial P$ in $P$,  hence the normal vector field of $\partial P$ in $P$ is equivalent to $W|_{\partial P}$, where $W$ is the vector field  in definition of induced spin structure. 

There is a trivialization defined by a frame field $(U,V,W,H)|_{\partial P}$ such that for any $x\in \partial P$, $U_x\in T(\partial P)|_x$, $V_x\in N_{F_g}(\partial P)|_x$, $W_x\in N_{P}(\partial P)|_x$, and $H_x$ is the restriction of  the vector field $H$ in the definition of induced spin structure.


Now $(U,V,W,H)\simeq\varsigma^4|_{\partial P}$ if and only if it extends over $T\RR^4|_P$. 

Suppose $(U,V,W,H)\simeq\varsigma^4|_{\partial P}$. Then by definition, the restriction of the induced spin structure $\sigma=e^*(\varsigma^4)$ on ${\partial P}$ is  given by $(U,V)|_{\partial P}$. Since  $(U,V)|_{\partial P}$ is the Lie-group spin structure, $q_\sigma(\partial P)=1$.

Since $(U,W)|_{\partial P}$ is the Lie-group spin structure, it does not (stably) extend over $TP$.
Then  $(U,V,W,H)\simeq\varsigma^4|_{\partial P}$ if and only if $(V,H)|_{\partial P}$ fails to extend (stably) over $N_{\RR^4}(P)$, i.e. $|P\cap P'|$ is odd. That is to say $q_e([\partial P])=1$.

We conclude $q_\sigma([\partial P])=1$ if and only if 
$q_e([\partial P])=1$.
\end{proof}

By Lemma \ref{Rohlin},  we restate Theorem \ref{spin-obstruction2} in term of induced spin structure.

\begin{theorem}\label{spin-obstruction3} (\cite{Hi}, cf. also \cite{Mon} for $g=1$)
For a trivial embedding  $e: F_g\to S^{4}$ and $f\in Aut(F_g)$,  $f$ is extendable over $\RR^4$ if and only if $f$ preserves the induced spin structure.
\end{theorem}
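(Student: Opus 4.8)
The plan is to read Theorem~\ref{spin-obstruction3} as a dictionary translation of Theorem~\ref{spin-obstruction2}, proving the two implications of the ``if and only if'' separately and invoking only results already established. Two substitutions have to be justified: replacing the Rohlin form $q_e$ by the induced spin structure $\sigma$ (equivalently its quadratic form $q_\sigma$), and replacing extendability over $S^4$ by extendability over $\RR^4$. The first is handled entirely by Lemma~\ref{Rohlin}: since $q_\sigma=q_e$ as functions $H_1(F_g;\ZZ_2)\to\ZZ_2$, and since $f$ acts on each of them through the same induced map $f_*$ via (S2), the statement ``$f$ preserves $q_e$'' and the statement ``$f$ preserves $\sigma$'' (that is, $f^*(\sigma)=\sigma$) are literally the same condition, requiring no further argument.

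For the forward implication I would argue directly from Theorem~\ref{spin-obstruction1}(2). If $f$ is extendable over $\RR^4$ for $e$ by some $\tilde f\in Aut(\RR^4)$, then in particular $f$ is topologically extendable over $\RR^4$, so Theorem~\ref{spin-obstruction1}(2) gives $f^*(\sigma)=\sigma$; by the dictionary above this says $f$ preserves the induced spin structure. Note this direction does not even use the trivial embedding hypothesis.

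For the converse I would feed the preserved form into Theorem~\ref{spin-obstruction2}. If $f$ preserves $\sigma$, hence $q_e$, then for the trivial embedding $e$ Theorem~\ref{spin-obstruction2} produces a smooth extension $\tilde f\in Aut(S^4)$. It remains to convert this into an extension over $\RR^4=S^4\setminus x$, and this is the only step with genuine content. Writing $y=\tilde f(x)$, the point is that $S^4\setminus F$ is connected, $F$ being of codimension $2$; since $x\notin F$ and $\tilde f(F)=F$ forces $y\notin F$ as well, I can choose a diffeomorphism $\phi$ of $S^4$ that is isotopic to the identity, supported in $S^4\setminus F$ (so $\phi|_F=\id$), and satisfies $\phi(y)=x$. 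Then $\phi\circ\tilde f$ again extends $f$, and it fixes $x$, so it restricts to a diffeomorphism of $\RR^4$ extending $f$. This supplies the extension over $\RR^4$ and closes the equivalence.

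The main (indeed only) obstacle is this last conversion from an $S^4$-extension to an $\RR^4$-extension; the subtlety is to move the displaced point $\tilde f(x)$ back to $x$ by an ambient diffeomorphism that simultaneously fixes $F$ pointwise, which is exactly what the connectedness of the complement $S^4\setminus F$ and the fact that $x\notin F$ allow (one point-pushes along a path in $S^4\setminus F$ joining $y$ to $x$, using an isotopy supported in a tube about that path disjoint from $F$). Everything else is a bookkeeping application of Lemma~\ref{Rohlin}, Theorem~\ref{spin-obstruction1}(2), and Theorem~\ref{spin-obstruction2}.
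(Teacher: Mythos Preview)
Your proposal is correct and follows the paper's intended approach. The paper gives no explicit proof: it simply writes ``By Lemma~\ref{Rohlin}, we restate Theorem~\ref{spin-obstruction2} in terms of induced spin structure'' and then states Theorem~\ref{spin-obstruction3}. You have unpacked exactly this restatement, and in fact you are \emph{more} careful than the paper on one point: the paper silently switches from ``extendable over $S^4$'' (Theorem~\ref{spin-obstruction2}) to ``extendable over $\RR^4$'' (Theorem~\ref{spin-obstruction3}) without comment, whereas you supply the point-pushing argument in $S^4\setminus F$ that makes this passage rigorous. That same point-pushing idea does appear later in the paper, in the proof of Theorem~\ref{criterion} (``we always can assume that the extension $\tilde f:S^4\to S^4$ has a fixed point $x$ in $S^4\setminus e(F_g)$''), so your argument is fully in the spirit of the paper's own methods.
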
 

\begin{proof}[Proof of Theorem \ref{criterion}]
Suppose $f:  F_g\to F_g$ is topologically extendable over $S^4$ for some smooth embedding $e: F_g\to S^4$. 
By classical fact in manifold topology, we always can assume that the extension $\tilde f: S^4\to S^4$ has a fixed point $x$
in $S^4\setminus e(F_g)$,  therefore $f$ is topologically extendable over $\RR^4$ for the embedding $e: F_g\to \RR^4=S^4\setminus x$. 
Hence there must be
a bounded spin structure on $F_g$, which is   $f$-invariant by Theorem \ref{spin-obstruction1}. So (2) implies (1).

Suppose  $\sigma_1$ is a bounded spin structure which is $f$-invariant, that is 
$$f^*(\sigma_1)=\sigma_1.$$

Let $$e: F_g\to \RR^4 =S^4\setminus x\subset S^4$$ be any trivial embedding, that is $F=e(F_g)$ bounds a 3-dimensional handlebody in $S^4$.
This provided a bounded spin structure
$\sigma=e^*(\varsigma^{4})$ on $F_g$ by Theorem \ref{spin-obstruction1}.

Since the mapping class group acts transitively on the bounded spin structures by Lemma \ref{trans}, there is a diffeomorphism $\phi: F_g\to F_g$
such that $$\phi^*(\sigma)=\sigma_1.$$ 
Then 
$$e\circ \phi : F_g\to S^4$$
is another trivial embedding with the image $F$, and 
$$(e\circ \phi)^*(\varsigma^{4})=(\phi^*\circ e^*)(\varsigma^{4})=\phi^*( e^*(\varsigma^{4}))=\phi^*( \sigma)=\sigma_1.$$
That is to say, the induced  bounded spin structure of the trivial embedding $e\circ \phi : F_g\to S^4$ is the $\sigma_1$, which is $f$
invariant. Then by Theorem    \ref{spin-obstruction3}, $f$ is extendable as diffeomorphism for the trivial embedding  $e\circ \phi : F_g\to S^4$.  So (1) implies (3).

Clearly (3) implies (2).

So we prove the theorem.
\end{proof}

\begin{remark} 
(1)   Theorem \ref{criterion} is a reduction, which transfers the problem to extend $f\in Aut(F_g)$ over $S^4$ for  some embedding  $F_g\to S^{4}$ to  the problem to find  a fixed point of $f$ on $\mathcal B_g$. 

(2) By  Hurwitz and Nielsen's theories,  each periodic map $f$ on $F_g$ is corresponding to  a set of numerical data.
However in practice, to see if $f$ has a fixed point  on  
$\mathcal B_g$,  we may need either (i) to find  a geometric model for the action of $f$ on $F_g$, as in the proof of Theorem \ref{4g+2,4g},
or (ii) to apply number theory when  $f$ has  some special period,  as in the proof of Theorem \ref{3-5-7};
or (iii) to present $f$ as a product of certain standard  generators of $\mathcal M(F_g)$, as carried in \cite{Mon} for $g=1$, the torus. 
Both (i) and (iii) are not easy in general.
 \end{remark}


\section{Extending periodic maps as  group actions}
The fact below is used in the proofs of Theorem \ref{group-ext} and Theorem \ref{4g+2,4g}.
\begin{lemma}\label{Wiman}
Suppose $w$ and $u$ are two periodic maps of order $k$ on $F_g$, $k=4g+2$ or $k=4g$. Then $w$ is (periodically) extendable if and only if $u$ is (periodically) extendable.
\end{lemma}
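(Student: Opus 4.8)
The plan is to split the statement into two elementary operations on periodic maps---conjugation and passage to coprime powers---to verify that each preserves both extendability and periodic extendability, and then to invoke the classical fact that, for $k=4g+2$ or $k=4g$, any two periodic maps of order $k$ differ exactly by such operations.

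First I would establish the following reduction. Suppose $w$ is periodically extendable over $S^4$ for an embedding $e\colon F_g\to S^4$, witnessed by $\tilde w\in\Aut(S^4)$ with $\tilde w\circ e=e\circ w$ and $\mathrm{ord}(\tilde w)=\mathrm{ord}(w)=k$. If $u=\phi\circ w^{\,j}\circ\phi^{-1}$ for some diffeomorphism $\phi\in\Aut(F_g)$ and some $j$ with $\gcd(j,k)=1$, set $e'=e\circ\phi^{-1}$. Iterating the defining relation gives $\tilde w^{\,j}\circ e=e\circ w^{\,j}$, and hence
\[
\tilde w^{\,j}\circ e'=\tilde w^{\,j}\circ e\circ\phi^{-1}=e\circ w^{\,j}\circ\phi^{-1}=(e\circ\phi^{-1})\circ(\phi\circ w^{\,j}\circ\phi^{-1})=e'\circ u .
\]
Thus $\tilde w^{\,j}$ extends $u$ for the embedding $e'$. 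Since $\gcd(j,k)=1$ we have $\mathrm{ord}(\tilde w^{\,j})=k=\mathrm{ord}(u)$, so $u$ is periodically extendable; dropping the order requirement gives the same statement for plain extendability. This construction is insensitive to regularity: if $\phi$ is a diffeomorphism then $e'$ has the same regularity as $e$ (smooth, piecewise smooth, or merely topological), and $\tilde w^{\,j}$ inherits the regularity of $\tilde w$, so the argument applies uniformly to all versions appearing in Theorems \ref{group-ext} and \ref{4g+2,4g}. Interchanging the roles of $w$ and $u$ via $w=\phi^{-1}\circ u^{\,j'}\circ\phi$ with $jj'\equiv1\pmod k$ gives the reverse implication, so $w$ is (periodically) extendable if and only if $u$ is.

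It remains to check that any two periodic maps $w,u$ of order $k=4g+2$ (resp.\ $k=4g$) are indeed related as $u=\phi\circ w^{\,j}\circ\phi^{-1}$. For this I would appeal to the classification of maximal periodic symmetries: the cyclic subgroup generated by a map of the top order $4g+2$ (resp.\ the second order $4g$) is unique up to conjugacy in $\mathcal M(F_g)$, because the Nielsen--Hurwitz numerical data---the quotient orbifold together with the rotation numbers at the branch points---are uniquely determined in these two extremal cases. Consequently, if $u$ has order $k$ then $\langle u\rangle$ is carried onto $\langle w\rangle$ by some $\phi\in\mathcal M(F_g)$, so $\phi\circ u\circ\phi^{-1}$ is a generator of $\langle w\rangle$, i.e.\ equals $w^{\,j}$ for some $j$ coprime to $k$; rearranging yields $u=\phi^{-1}\circ w^{\,j}\circ\phi$, which feeds into the reduction above and completes the proof. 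The only nontrivial input is exactly this uniqueness of the cyclic action in the two extremal orders---Wiman's theorem for $4g+2$, and the analogous signature computation for $4g$---which I expect to be the one concrete point to pin down, and which is in any case made explicit by the geometric models constructed in the proof of Theorem \ref{4g+2,4g}.
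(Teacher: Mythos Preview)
Your proposal is correct and follows essentially the same route as the paper: both reduce to showing that conjugation together with passage to a coprime power preserves (periodic) extendability, via the new embedding $e'=e\circ\phi^{-1}$ and the extension $\tilde w^{\,j}$, and both invoke the fact that for $k=4g+2$ or $4g$ the cyclic group of order $k$ acting on $F_g$ is unique up to conjugacy (the paper cites Kulkarni \cite{Ku} for this). Your added remarks on regularity of $e'$ and $\tilde w^{\,j}$ are a useful clarification, since the lemma is applied to both smooth and non-smooth embeddings.
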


\begin{proof}
We need only to argue that $u_g$ is (periodically) extendable implies that $w_g$ is (periodically) extendable.
Let $\left<w \right>$ and $\left<  u \right>$ be the group generated by $w$ and $u$ respectively.
Then both of them are cyclic groups of order $k$.
Since $k=4g$ or $4g+2$,  it is known that those two groups are conjugated \cite{Ku},
that is to say, there is an automorphism $h$ on $F_g$ such that 
$$h^{-1}\left<w\right > h=\left<u\right >.$$
So $h^{-1}\circ w\circ h=u^p$ for some $p$ which co-prime with $k$.
 
Suppose $u$ is (periodically) extendable for some embedding $e: F_g\to \RR^4$. Then there exists $\tilde u\in Aut(\RR^4)$ 
($\tilde u$ is an element $\in Aut(\RR^4)$ of order $k$)
such that
$$  \tilde u \circ e = e\circ u.$$
Then clearly 
$$  \tilde u^p \circ e = \tilde u^{p-1}\circ  (\tilde u\circ e) =\tilde u^{p-1} \circ( e\circ u)= ...=e\circ u^p.$$
That is to say $u^p$ is (periodically) extendable for $e$. Then from 
$$\tilde u^p \circ e =e\circ u^p=e\circ h^{-1}\circ w\circ h,$$
we have 
$$\tilde u^p \circ e\circ h^{-1} =e\circ h^{-1}\circ w.$$
Let $\tilde w=\tilde u^p$ and $e'=e\circ h^{-1}:F_g\to \RR^4$.
Then we have 
$$\tilde w \circ e' =e'\circ w.$$
That is $w$ is (periodically) extendable w.r.t. $e'$.
\end{proof} 

\begin{proof}[{Proof of Theorem \ref{group-ext}}]

(1)  Now we consider $\RR^4$ as $4$-dimensional Euclidean space, and $S^3$ as its unit sphere.
View $\RR^{4}$ as the product  
$$\RR^{4}=\CC_1\times \CC_2.$$
where each $\CC_j$ is a complex line  with a complex coordinate $z_j$.
Let $S^1_j$ be the unit circle of $\mathbb{C}_j$, $j=1,2$.
Then
  
$$S^3=\{(z_1,z_2)\in\mathbb{C}^2\mid |z_1|^2+|z_2|^2=1\}.$$
The Clifford torus given by 
$T=\{z_1, z_2\in S^3||z_1|=|z_2|\}$ divides $S^3$ into two solid tori given by 
$$H_1= \{z_1, z_2\in S^3||z_1|\ge|z_2|\},\,\,H_2= \{z_1, z_2\in S^3||z_1|\le|z_2|\}.$$
 $H_1$ and $H_2$  have $S^1_1\times \{0\}$ and $\{0\}\times S^1_2$ as the center circle respectively. 
For short, below we still denote $S^1_1\times \{0\}$ by $S^1_1$ and $\{0\}\times S^1_2$ by $S^1_2$.

Define a cyclic group action $\tau_{m,n}\in SO(4)$ on the triple $(\RR^4, S^3, T)$ by:
$$\tau_{m,n}: (z_1,z_2)\mapsto(e^{\frac{2\pi i}{m}}z_1,e^{\frac{2\pi i}{n}}z_2).$$
The order of $\tau_{m,n}$ is the least common multiple of $m$ and $n$.

\begin{figure}[htbp]
\begin{center}
\includegraphics[width=240pt, height=180pt]{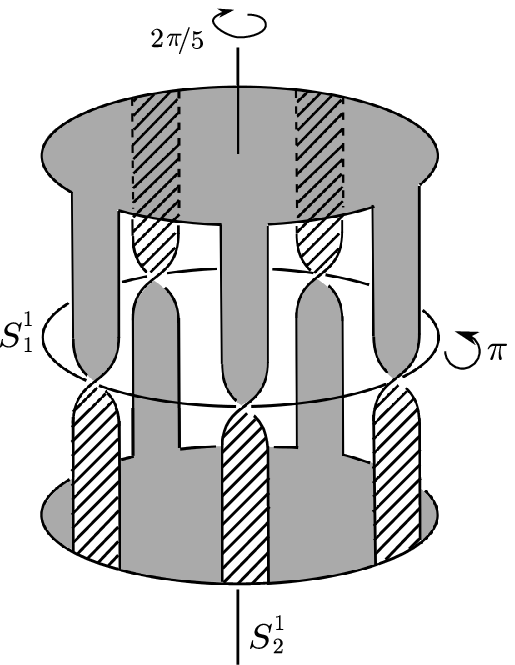}
\end{center}
\centerline{Figure 1}
\end{figure}

In Figure 1 (where $g=2$), the bounded surface $X$ is obtained from two round discs evenly connected by $2g+1$ half twisted bands, and with
a given embedding
$$X\subset \RR^3\subset S^3\subset \RR^4$$ 
where $S^1_1$ is the circle crossing the $2g+1$ bands at their middle, and $S_2^1$ the vertical line crossing the centers of two discs
adding a point at infinity.

Clearly $X$ is invariant under the $\pi$-rotation around the axis $S^1_1$ and under the $\frac {2\pi}{2g+1}$-rotation around the axis
$S^1_2$. That is to say 
we have periodic map 
$$\tau_{2,2g+1}: (S^3, X)\to (S^3, X)$$
of order $4g+2$.

It is easy to see that $\partial X$, the boundary of $X$, is  connected, and $X$ is oriented. 
Since $X$ has deformation retractor which is a graph with two vertices and $2g+1$ edges,
the Euler Characteristic 
 $$\chi(X)=2-(2g+1) =1-2g.$$
 We conclude that $X$ is obtained from $F_g$ with one disc $D$ removed.
 Clearly we can extend the restriction of $\tau_{2,2g+1}$ on $X$ to the disc $D$ by Alexander trick to obtained a Wiman map $u_g$ on $F_g$.
 
The above discussion tell us that the restriction of $u_g$ on $X$  is 
periodically extendable over $S^3$  for the embedding $X\subset S^3$ given by Figure 1.
Now we are going to extend our embedding from $X$ to  the whole $F_g$ into $\RR^4$ so that  $u_g$ is periodically
extendable over $\RR^4$. 
Let $O$ be the origin of $\RR^4$ and $Ox$ be the line segment from
$O$ to $x$ for any point $x\in \RR^4$, then 
$$O*\partial X=\bigcup_{x\in \partial X} {Ox}\subset \RR^4$$
is a disc invariant under the action $\tau_{2,2g+1}$.
Then $$\hat X=X\cup O*\partial X\subset \RR^4$$
is a topological embedding of $F_g$ into $\RR^4$ which is invariant under the  action $\tau_{2,2g+1}$,
and the restriction of $\tau_{2,2g+1}$ on  $F_g=\hat X$ is the Wiman map $u_g$. 

Consider $S^4$ as the one point compactification of $\RR^4$. Since $\tau_{2,2g+1}\in SO(4)$, then $\tau_{2,2g+1}$ can be extended to a diffeomorphism on $S^4$, denoted by  $\tilde u_g$, we have the embedding
$$e: F_g=\hat X \subset  R^4\subset S^4$$
and 
$$\tilde u_g \circ e= u_g \circ e.$$
So the Wiman map $u_g$ is periodically extendable. Then by Lemma \ref{Wiman}, we prove (1).

\begin{remark} (i) The construction of the embedding $e: F_g\subset \RR^4$ 
is based on an observation that even a Wiman map $w_g$ on $F_g$ is not periodically  extendable over $S^3$, but it is periodically 
extendable over $S^3$ after making a suitable puncture, that is to remove a $w_g$-invariant disc.
Recall the fact that no closed lens spaces are embeddable into $S^4$ \cite{Han}, but some of them are embeddable into $S^4$ after
punctured \cite{Zee}.
    
(ii) The embedding $e$ is not smooth,  indeed is not locally flat at $O$. 

(iii) In general $X$ in Figure 1 is the Seifert surface of torus knot of type $(2, 2g+1)$. This provides another way to see the genus and symmetry of $X$.
\end{remark}
 
(2) We need only to prove (2) for a Wiman map $u_g$ described in (1) by Lemma \ref{Wiman}. Then we need only to prove that $f=u_g^2$ is not periodically extendable
for any smooth embedding.

For each map $f:X\to X$, denote the fixed point set of $f$ by $\FF(f)$.

Suppose $F_g\subset S^4$ and there is a smooth periodic map $\tilde f$ on $S^4$ such that the restriction of $\tilde f$ on $F_g$ is $f$.
To reach a contradiction, we need a sequence of known facts and observations: 

(i)  $f: F_g\to F_g$ has exactly three fixed points,
which  is an observation from Figure 1, the  two fixed points of $f$ on $F_g=\hat X$ are the  centers of two round disks in Figure 1 and another fixed point is the center of the disk $D$ attached to $X$.  

(ii) Since $f$ is of order $2g+1$,  $\tilde f$ is of order $2g+1$. Since $\tilde f$ is of odd order, $\tilde f$ must be orientation preserving.

(iii)  Since $\tilde f$ is a periodic map on $S^4$,  according to the classical 
Smith theory \cite{Sm},
$\FF(\tilde f)$ must be a homology $k$-sphere $H^k\subset S^4$, $k=0,1,2,3,4$. The case $k=4$ is ruled out since $\tilde f$ is
not the identity. By (ii) $\tilde f$ is orientation preserving. Then cases $k=1,3$ are ruled out by Lemma \ref{even} below.  Since $0$-homology sphere is the $S^0$,
which consists of two points, and  $\FF(\tilde f)$ contains at least three points by (i),  the case $k=0$ is ruled out. We conclude $k=2$.

(iv) Since $\tilde f$ is a smooth periodic map on $S^4$, a classical fact in differential topology is that $\FF (\tilde f)$ is a smooth sub-manifold of $S^4$ \cite[page 74]{Wal}.
By the conclusion of (iii),  and the fact that a 2-manifold with homology as the 2-sphere is the 2-sphere,
we obtained that $\FF (\tilde f)$  is a smooth 2-sphere $S$ in $S^4$.

 \begin{lemma}\label{even}
Suppose $f$ is an orientation preserving smooth periodic map on $S^m$. Then the co-dimension of $\FF(f)$ in $S^m$ is even.   
 \end{lemma}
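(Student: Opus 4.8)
The plan is to work locally at a fixed point and reduce the statement to a fact about the linear action of $f$ on a tangent space. Since $f$ has finite order, I would first average an arbitrary Riemannian metric on $S^m$ over the cyclic group $\langle f\rangle$ to obtain an $f$-invariant metric; with respect to this metric $f$ is an isometry. Pick any point $p\in \FF(f)$ (if $\FF(f)=\emptyset$ there is nothing to prove). Because isometries commute with the exponential map, near $p$ one has $f=\exp_p\circ (df_p)\circ \exp_p^{-1}$, where $df_p$ is an orthogonal linear automorphism of $T_pS^m$. Thus $\exp_p$ carries the fixed subspace $\mathrm{Fix}(df_p)\subset T_pS^m$ diffeomorphically onto $\FF(f)$ near $p$, so the codimension of $\FF(f)$ at $p$ equals $\dim N_p$, where $N_p$ denotes the orthogonal complement of $\mathrm{Fix}(df_p)$ in $T_pS^m$.

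Next I would exploit the orientation hypothesis. The splitting $T_pS^m=\mathrm{Fix}(df_p)\oplus N_p$ is orthogonal and $df_p$-invariant, with $df_p$ restricting to the identity on the first summand and to a transformation having no $+1$-eigenvector on $N_p$. Since $f$ is orientation preserving and fixes $p$, the differential lies in $\SO(T_pS^m)$, i.e.\ $\det df_p=+1$; as $\det(df_p|_{\mathrm{Fix}(df_p)})=+1$, it follows that $df_p|_{N_p}$ lies in $\SO(N_p)$.

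Finally I would invoke the elementary linear-algebra fact that every element of $\SO$ of an odd-dimensional space has $+1$ as an eigenvalue (equivalently, fixes a nonzero vector): its real characteristic polynomial has odd degree, complex eigenvalues occur in conjugate pairs each contributing $+1$ to the determinant, so the product of the real eigenvalues $\pm1$ equals $+1$ and their number is odd, forcing at least one of them to be $+1$. If $\dim N_p$ were odd, this fact applied to $df_p|_{N_p}$ would produce a $+1$-eigenvector in $N_p$, contradicting $N_p\cap \mathrm{Fix}(df_p)=0$. Hence $\dim N_p$ is even, and the codimension of $\FF(f)$ is even.

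The only delicate point is the linearization step: one must be sure that the $f$-invariant metric makes $f$ an isometry and that the exponential map then identifies $\FF(f)$ near $p$ precisely with the linear fixed set $\mathrm{Fix}(df_p)$, so that the codimension is genuinely read off from $df_p$. Once this is in place the remaining determinant bookkeeping is routine, and the argument applies verbatim to each component of $\FF(f)$.
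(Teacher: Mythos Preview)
Your proof is correct and takes essentially the same approach as the paper: both reduce to the linear action $df_p$ on the tangent space at a fixed point and use the constraint $\det df_p=+1$ to force the codimension of the fixed subspace to be even. The only cosmetic difference is that the paper decomposes $T_pS^m$ explicitly into the $\pm 1$-eigenspaces and $2$-dimensional rotation blocks and reads off $\det df_p=(-1)^{\dim V_{-1}}$, whereas you argue directly that $df_p|_{N_p}\in\SO(N_p)$ with no $+1$-eigenvector forces $\dim N_p$ even; your linearization via an invariant metric and the exponential map is in fact more explicit than the paper's, which simply invokes the classical fact that $\FF(f)$ is a smooth submanifold with $\dim\FF(f)=\dim\FF(df_x)$.
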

 
 \begin{proof}  
 Suppose first $A: \RR^m\to \RR^m$ is a linear periodic map. 
 Then $A\in GL_m(\RR)$ has a Jordan canonical form $J$ in the orthogonal group $O(n)$, that is to say there is $P\in GL_m(R)$
 such that $A=P^{-1}JP$. We may assume that $P=I_m$, then 
 $A=J$.
  Correspondingly, there is a decomposition 
 $$\RR^m=V_{-1}\oplus V_0 \oplus V_{1}\oplus ....\oplus V_{k},$$
where $V_j$ are $A$-invariant subspace,  $\text{dim}V_j\ge 0$ for $j=-1,0$, and $\text{dim}V_{j}=2$ for $j=1,..., k$, such that 
for each $$x=x_{-1}+ x_0+ x_1+...+ x_k,$$ where $x_j\in V_j$, we have

$$Ax= -x_{-1} +x_0+r_1(x_1)+...+ r_k(x_k) \qquad (3.1)$$
where each $r_j\in SO(2)$ is a periodic map on $V_j$ of order $>2$ for $j>0$.

According to (3.1), $\text{dim}\FF(A)=\text{dim} V_0$. So co-dimension of $\FF(A)$ in $\RR^m$ is $\text{dim} V_{-1}+2k$. 
 Since each $r_j$ 
 is in $SO(2)$, the determinant of $A$ is $(-1)^{\text{dim} V_{-1}}$. 
  If $A$ is orientation preserving, we must have ${\text{dim} V_{-1}}$ is even.  
 So co-dimension of $\FF(A)$ in $\RR^m$ is even. 
 
Now we consider the smooth periodic map $f$  on $S^m$. A classical result claims $\FF(f)$ in a smooth sub-manifold of $M$ \cite{Wal}.
Pick any point $x\in \FF(f)$.  Since $f$ is orientation preserving, the tangent map $df_x : T_xM\to  T_xM$ is orientation preserving linear map. 
 By the conclusion we just get in last paragraph,  the co-dimension of $\FF(df_x)$ in $T_xM=\RR^m$ is even. Since $\text{dim} \FF(f)=\text{dim} \FF(df_x)$, the lemma follows.
\end{proof}

\begin{lemma}\label{transverse}
The two smooth manifolds $F_g$ and $S$ above meet transversely in $S^4$.
\end{lemma}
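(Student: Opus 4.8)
The plan is to locate the intersection $F_g\cap S$ first and then verify transversality pointwise by linearizing $\tilde f$ at each intersection point. Since $\tilde f$ extends $f$, a point of $F_g$ is fixed by $\tilde f$ exactly when it is fixed by $f$, so
$$F_g\cap S=F_g\cap \FF(\tilde f)=\FF(f),$$
which by observation (i) above is precisely the set of three isolated fixed points of $f$. Because $\dim F_g=\dim S=2$ while $\dim S^4=4$, transversality at a point $x\in F_g\cap S$ is equivalent to $T_xF_g\cap T_xS=\{0\}$, i.e. to $T_xF_g\oplus T_xS=T_xS^4$. So the whole statement reduces to checking this trivial-intersection condition at each of the three points.

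Next I would set up the local linear model at a fixed point $x$. As $\tilde f$ is a smooth map of finite order fixing $x$, averaging a Riemannian metric makes $\tilde f$ an isometry, and near $x$ it is conjugate via the exponential map to its differential $A:=d\tilde f_x$, an orthogonal transformation of $T_xS^4$. In this model the smooth fixed set $S=\FF(\tilde f)$ corresponds to the fixed subspace of $A$, so that
$$T_xS=\FF(A)\subset T_xS^4.$$

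The heart of the argument is to show $T_xF_g\cap \FF(A)=\{0\}$. Since $\tilde f(F_g)=F_g$, the plane $T_xF_g$ is $A$-invariant, and $A|_{T_xF_g}=df_x$. Now $x$ is an \emph{isolated} fixed point of $f$, so linearizing $f$ on $F_g$ in the same way shows $\FF(df_x)=\{0\}$, since a nonzero fixed tangent vector would yield a positive-dimensional fixed set of $f$ near $x$. Therefore
$$T_xF_g\cap T_xS=T_xF_g\cap \FF(A)=\FF\big(A|_{T_xF_g}\big)=\FF(df_x)=\{0\},$$
and the dimension count gives transversality at each of the three points.

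The main obstacle I anticipate is the careful justification of the two linearization facts, namely that $T_xS=\FF(d\tilde f_x)$ and that the $0$-dimensionality of $\FF(f)$ near $x$ forces $\FF(df_x)=\{0\}$. Both follow from the standard result (already invoked via \cite{Wal}) that the fixed set of a smooth finite-order map is a smooth submanifold whose tangent space at a fixed point coincides with the fixed subspace of the differential there; for $T_xS$ this identifies the $2$-plane directly, and for $f$ the isolatedness of the three fixed points forces the fixed subspace of $df_x$ to be trivial. Once these are in place, transversality at all three intersection points is immediate from the displayed computation.
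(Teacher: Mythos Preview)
Your proposal is correct and follows essentially the same route as the paper: both arguments linearize $\tilde f$ at a point $x\in F_g\cap S$, observe that $T_xF_g$ is $d\tilde f_x$-invariant with $d\tilde f_x|_{T_xF_g}=df_x$, and use that $d\tilde f_x$ acts as the identity on $T_xS$ but has no nonzero fixed vector on $T_xF_g$ to conclude $T_xF_g\cap T_xS=0$. The paper phrases the last step as ``$d\tilde f_x|_{T_xF_g}$ has no real eigenvalue, hence no common eigenvalue with $d\tilde f_x|_{T_xS}$,'' whereas you phrase it via $\FF(df_x)=\{0\}$ from the isolatedness of the fixed points; these are the same observation in slightly different language.
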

\begin{proof}
For any $x\in F_g\cap S$, we have 
$$\tilde f (x)=x,\,\,  \tilde f (F_g)=F_g,\,\,   \tilde f(S)=S,$$ 
and $\tilde f|S$ is the  identity map on $S$.

Let $df_x: T_x S^4\to T_x S^4$ be the tangent map of $f$ at $x$, then we have 

$$d\tilde f_x|T_x F_g=T_x F_g,\,\,  d\tilde f_x| T_x S=T_x S,$$

Note that $d\tilde f_x |T_x F_g$ does not have real eigenvalue, and $ d\tilde f_x| T_x S$ is the identity map on $T_x S$.
Since $d\tilde f_x |T_x F_g$ and  $ d\tilde f_x| T_x S$ have no common eigenvalues, we have 
$$T_x F_g\cap  T_x S=0\in T_x S^4,$$
then we have 

$$T_x S^4 =T_x F_g\oplus  T_x S.$$
That is to say $F_g$ and $S$ meet transversely in $S^4$.
\end{proof}

We reach a contradiction as below: On one hand, since the embedding $F_g$ is homotopic to a constant map in $S^4$, the
$\text{mod}$ 2 algebraic intersection number of $F_g$ and $S$ must be 0 \cite{Wal}. On the other hand,
by Lemma \ref{transverse} and Observation (i), $F_g$ and $S$ meet transversely in three points, therefore  their  $\text{mod}$ 2 algebraic intersection number
must be 1. This is a contradiction.
\end{proof}

\section{Extending periodic maps of  large orders}

It is well-known fact that $F_g$ can be obtained by identifying the opposite edges of regular $4g$-gon $\Gamma_{4g}$,
see Figure 2 for $g=2$. The oriented edges presented by the same letter are identified. All corner points are identified to a point.
\begin{figure}[htbp]
\begin{center}
\includegraphics[width=130pt, height=130pt]{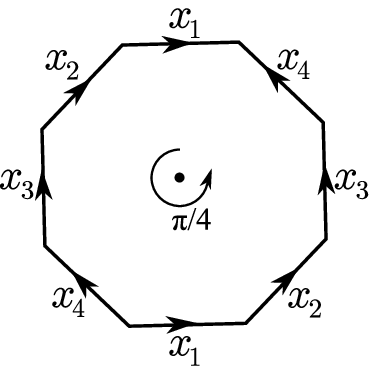}
\end{center}
\centerline{Figure 2}
\end{figure}

Let $v_g$ be a map of order $4g$ on $\Gamma_g$ which is a rotation of angle $\pi/2g$
around the center. It is easy to see that $v_g$ induces a map of order $4g$ on $F_g$, still denoted as $v_g$.

We can also think of the $4g$-gon $\Gamma_{4g}$ of $F_g$ as the union of two regular $(2g+ 1)$-gon’s which lies in the plane, see 
Figure  3 for $g = 2$.

\begin{figure}[htbp]
\begin{center}
\includegraphics[width=230pt, height=130pt]{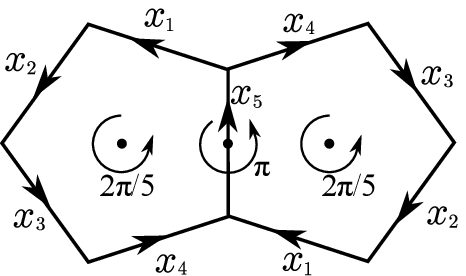}
\end{center}
\centerline{Figure 3}
\end{figure}

Let $\tau_g$ be a map of order $2g+1$ whose restriction on each regular $(2g+1)$-gon is a rotation of angle $2\pi/(2g + 1)$
around the center. Let $\eta$ be a rotation of angle $\pi$ in the plane which switches the two regular $(2g+1)$-gon’s. 
The composition of these two maps 
$w_g=\eta\circ \tau_g^{g+1}$
is a map of order $4g +2$.
Then it is easy to see  each one of the above three maps induces a map on $F_g$ of the same order, still denoted by $\tau_g$ and $\eta$
and  $w_g=\eta\circ \tau_g^{g+1}.$ Then $w_g$ is a Wiman map on $F_g$. Since  $\eta$ and $\tau_g$ commute,
we have $$w_g^2=\eta\circ \tau_g^{g+1}\circ \eta\circ \tau_g^{g+1}=\eta^2\circ  \tau_g^{2(g+1)}=\tau_g,$$
since $\eta^2$ and $\tau_g^{2g+1}$ are the identity on $F_g$.


Convention: In this section, we will often use the same letter $x$ to present a closed path on $F_g$ and its homology class  in
$H_1(F_g, \ZZ)$, and in $H_1(F_g, \ZZ_2)$.
Similarly  we often use the same letter $f$ to present an diffeomorphism on $F_g$ and its induced map on $H_1(F_g, \ZZ_2)$. 

When we identify  the opposite edges of regular $4g$-gon $\Gamma_{4g}$ to get $F_g$, the quotient of each $x_i$ is a
simple closed curve, still denoted by $x_i$, in $F_g$. 
We have the following elementary geometric facts:

(1) The homology classes $x_1, x_2,... x_{2g-1}, x_{2g}$ is a basis of $H_1(F_g, \ZZ)$, hence in $H_1(F_g, \ZZ_2)$, and we have
(see Figure 3)
$$x_1+ x_2+... +x_{2g-1}+x_{2g}+x_{2g+1}=0 \qquad (G1)$$
in $H_1(F_g, \ZZ)$, hence in $H_1(F_g, \ZZ_2)$.

(2)  The action of $\tau_g$ forms a closed orbit
$$x_1\to x_2\to x_3\to ... \to  x_{2g}\to  x_{2g+1}\to x_1 \qquad (O1)$$
on both $H_1(F_g, Z)$ and $H_1(F_g, Z_2)$. The action of $v_g$ forms a closed orbit
$$x_1\to x_2\to x_3\to ... \to  x_{2g}\to -x_1\to -x_2\to -x_3\to ... \to -x_{2g} \to x_1 $$
on $H_1(F_g, Z)$, and a closed orbit
$$x_1\to x_2\to x_3\to ... \to  x_{2g}  \to x_1, \qquad (O2)$$
on $H_1(F_g, Z_2)$.

\begin{lemma}\label{GI} The $\ZZ_2$-intersection number 
$$x_i\cdot x_j=1\,\,  \text {for  $i\neq j$}\qquad (G3).$$
\end{lemma}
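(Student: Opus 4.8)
The plan is to determine all the intersection numbers $x_i\cdot x_j$ simultaneously by exploiting the two cyclic symmetries already recorded, namely the orbit $(O1)$ of $\tau_g$ and the orbit $(O2)$ of $v_g$ on $H_1(F_g,\ZZ_2)$, and then to pin down the common value using non-degeneracy of the mod-$2$ intersection form. First I set $c(d):=x_i\cdot x_{i+d}$. Since $\tau_g$ is a homeomorphism it preserves the $\ZZ_2$-intersection form, and by $(O1)$ it shifts indices cyclically $x_i\mapsto x_{i+1}$ modulo $2g+1$; hence $c(d)$ is well defined as a function on $\ZZ/(2g+1)$, and symmetry of the form gives $c(d)=c(-d)=c(2g+1-d)$. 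In particular every intersection number we must compute, including those involving $x_{2g+1}$, equals $c(d)$ for the appropriate residue $d\not\equiv 0$.

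Next I would feed in the second symmetry. By $(O2)$ the map $v_g$ permutes the basis $x_1,\dots,x_{2g}$ cyclically modulo $2g$ (with $x_{2g}\mapsto x_1$) and again preserves the intersection form. The crucial point is the wrap-around: for $d\in\{1,\dots,2g-1\}$, applying the power $v_g^{\,2g-d}$ to $x_1\cdot x_{1+d}$ sends the second index past $x_{2g}$ back to $x_1$, yielding $x_1\cdot x_{1+d}=x_1\cdot x_{2g+1-d}$, that is $c(d)=c(2g-d)$. Combining this with the reflection $c(2g-d)=c(2g+1-(2g-d))=c(d+1)$ from the previous paragraph gives $c(d)=c(d+1)$ for every such $d$, so that $c(1)=c(2)=\dots=c(2g)$: all the intersection numbers share one common value $\epsilon\in\{0,1\}$.

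Finally I would eliminate $\epsilon=0$. If $\epsilon=0$ then $x_1\cdot x_j=0$ for all $j$, so the nonzero class $x_1$ would lie in the radical of the intersection pairing, contradicting the non-degeneracy of the $\ZZ_2$-intersection form on the closed orientable surface $F_g$ (Poincar\'e duality mod $2$). Hence $\epsilon=1$, i.e. $x_i\cdot x_j=1$ for all $i\neq j$, which is $(G3)$.

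The step I expect to be the real obstacle, or at least the indispensable idea, is recognizing that the $\tau_g$-symmetry together with non-degeneracy is \emph{not} enough: for instance at $g=2$ the $\tau_g$-symmetric pattern $c(1)=1,\ c(2)=0$ also yields a non-degenerate form, so it is only the second symmetry $v_g$, of order $2g$ coprime to $2g+1$, that forces all values to coincide. One must therefore use both cyclic actions and check carefully that the index bookkeeping modulo $2g$ and modulo $2g+1$ interlocks as claimed. A more pedestrian alternative would be to compute each $c(d)$ directly from the combinatorics of the curves' ends around the single vertex of the polygon, counting mod $2$ whether the two ends of $x_i$ and of $x_j$ interleave on the link circle; this works but is considerably more cumbersome than the two-symmetry argument above.
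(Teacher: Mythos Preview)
Your argument is correct and is genuinely different from the paper's. The paper proceeds geometrically: it simply observes from the polygon pictures that for $i\ne j$ the two simple closed curves $x_i$ and $x_j$ meet transversely in exactly the single point $O$ obtained as the image of the corners of $\Gamma_{4g}$; as an alternative it puts the regular hyperbolic metric on $\Gamma_{4g}$, so that $x_i,x_j$ become closed geodesics meeting once at $O$ with angle $\tfrac{2|i-j|\pi}{4g}$ for $i,j\le 2g$, and then handles $x_{2g+1}$ via (G1). Your route instead never looks at the local picture at $O$: you use only the two orbit relations (O1) and (O2), the symmetry of the pairing, and Poincar\'e duality. What your approach buys is that (G3) becomes a formal consequence of (O1)--(O2) and nondegeneracy, with no picture required; the price is that you must invoke \emph{both} symmetries, and your example at $g=2$ correctly shows that (O1) plus nondegeneracy alone would not suffice. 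The paper's approach, by contrast, establishes each $x_i\cdot x_j=1$ directly and does not need nondegeneracy at all.

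One small point worth making explicit in your write-up: (O1) and (O2) are recorded in the paper for the \emph{same} basis $\{x_1,\dots,x_{2g}\}$ (with $x_{2g+1}$ defined by (G1)), coming from two polygonal models of the same $F_g$; since you lean on both simultaneously, you are implicitly using that identification. The paper states this as part of its setup, so within that framework your proof is complete.
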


\begin{proof} 
By Figures 2 and 3, one can observe that for  $i\neq j$, two simple closed curves $x_i$ and $x_j$ in $F_g$ meet transversely
exactly one point $O$, the quotient of the corner of    $\Gamma_{4g}$. Then the lemma is proved.

Another proof is to  think $\Gamma_{4g}$ as a hyperbolic regular $4g$-gon with angle $\frac {2\pi}{4g}$
at each corner. Now the quotient $F_g$ is a hyperbolic surface. Then  for $i\neq j$ and $i, j\in \{1,..., 2g\}$, 
$x_i$ and $x_j$ are geodesics meet at $O$ with angle 
$\frac {2|i-j|\pi}{4g}$, so   $x_i\cdot x_j=1$.
For $i\ne 2g+1$, by (G1), $x_i\cdot x_i=0$ and the fact we just proved, we have 
 $$x_i\cdot x_{2g+1}=x_i\cdot (x_1+...+x_i+...+x_{2g})=2g-1\equiv 1 \,\, \text{mod}\,\, 2.$$
 The lemma follows.
\end{proof}

From now on we  call the basis $x_1, x_2,..., x_{2g-1}, x_{2g}$  together with $x_{2g+1}$ the  $\tau_g$-orbit,
and call $x_1, x_2,..., x_{2g-1}, x_{2g}$ the $v_g$-orbit.

In term of $q_\sigma$, the formula $(S2)$ becomes 

$$f^*(q_\sigma)(x)=q_\sigma(f(x)) \qquad (S2')$$
for any $q_\sigma \in \mathcal{S}(F_g)$ and $x\in H_1(F_g, \ZZ_2)$.
Suppose $\sigma$ is $f$-invariant, that is $f^*(\sigma)=\sigma$, equivalently $f^*(q_\sigma)=q_\sigma$, then we have

$$q_\sigma(x)=q_\sigma(f(x))\qquad (G4).$$ 
(G4) is the mean of that $\sigma$ is $f$-invariant in term of $q_\sigma$.


\begin{proposition}\label{spin1}
(1)  There exists  $q\in \mathcal B(F_g)$ such that  $\tau_g^*(q)=q$  if and only if  either $g=4k$ or $g=4k+3$.

(2) There exists  $q\in \mathcal B(F_g)$ such that $v_g^*(q)=q$ if and only if either  $g=4k$, or $g=4k+1$, or $g=4k+3$.
\end{proposition}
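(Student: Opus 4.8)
The plan is to invoke Theorem \ref{criterion} implicitly: it suffices to decide, for each of $\tau_g$ and $v_g$, whether the finite set of invariant quadratic forms contains one of Arf invariant $0$. I would work entirely with the quadratic form $q$ associated to a spin structure, which by (S1) is determined by its values on the basis $x_1,\dots,x_{2g}$, and use (G4) as the meaning of invariance together with (G3) for intersection numbers.

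First I would pin down the invariant forms. For $\tau_g$ the orbit (O1) is a single $(2g+1)$-cycle through $x_1,\dots,x_{2g+1}$, so invariance forces $q(x_1)=\dots=q(x_{2g+1})=:a$. But $x_{2g+1}=x_1+\dots+x_{2g}$ by (G1), and expanding with (S1) and (G3) gives
$$q(x_{2g+1})=\sum_{i=1}^{2g}q(x_i)+\sum_{1\le i<j\le 2g}x_i\cdot x_j=2g\,a+\binom{2g}{2}\equiv g\pmod 2.$$
Matching this against $q(x_{2g+1})=a$ forces $a\equiv g\pmod 2$, so there is a \emph{unique} $\tau_g$-invariant form. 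For $v_g$ the orbit (O2) cycles only $x_1,\dots,x_{2g}$, so invariance forces $q(x_1)=\dots=q(x_{2g})=:a$ with $a\in\{0,1\}$ free (one checks $v_g$ fixes $x_{2g+1}$, so (G1) imposes no further condition); hence there are exactly two $v_g$-invariant forms $q_0$ and $q_1$.

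Next I would compute the Arf invariant of a form that is constant $=a$ on the basis. For $x=\sum_{i\in S}x_i$ with $S\subseteq\{1,\dots,2g\}$, (S1) and (G3) give $q(x)=a\,|S|+\binom{|S|}{2}\pmod 2$, depending only on $s=|S|$. I would then evaluate the Gauss sum
$$\sum_{x}(-1)^{q(x)}=\sum_{s=0}^{2g}\binom{2g}{s}(-1)^{a s+\binom{s}{2}},$$
which equals $(-1)^{\Arf(q)}2^g$; via the identity $(-1)^{\binom{s}{2}}=\operatorname{Re}\big((1-i)\,i^{s}\big)$ and the binomial theorem this becomes $\operatorname{Re}\big[(1-i)\,i^{a}(1+i)^{2g}\big]$, and since $(1+i)^{2g}=2^g i^g$ the sign is read off modulo $4$. (Equivalently, one counts the zeros of $q$ by a roots-of-unity filter and compares with $|\mathcal B_g|=2^{2g-1}+2^{g-1}$ via Lemma \ref{BU}.) The outcome is that $q_0$ bounds iff $g\equiv 0,1\pmod 4$ and $q_1$ bounds iff $g\equiv 0,3\pmod 4$.

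Finally I would specialize. For $\tau_g$ the unique invariant form has $a\equiv g$, so it bounds iff ($g$ even and $g\equiv 0,1$) or ($g$ odd and $g\equiv 0,3$), i.e.\ iff $g\equiv 0,3\pmod 4$, proving (1). For $v_g$ a bounding invariant form exists iff at least one of $q_0,q_1$ bounds, which by the above fails exactly when $g\equiv 2\pmod 4$, proving (2). The hard part will be the Gauss-sum evaluation: showing the binomial sum is $\pm 2^g$ with the correct $g\bmod 4$ sign is the one genuinely non-formal step, and keeping the four residue cases straight while aligning them with $|\mathcal B_g|$ is where the care is needed.
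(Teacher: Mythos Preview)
Your proposal is correct and follows essentially the same route as the paper: you identify the invariant forms exactly as in Lemma~\ref{01}, and your Gauss-sum evaluation via $(1+i)^{2g}=2^g i^g$ is the same computation the paper packages as Lemmas~\ref{0-1} and~\ref{A_i} (counting zeros of $q$ via $A_0+A_1$ or $A_0+A_3$ and comparing with Lemma~\ref{BU}). The only cosmetic difference is that you read off the Arf invariant from the sign of the Gauss sum, while the paper counts zeros and matches against $|\mathcal B_g|=2^{2g-1}+2^{g-1}$; these are trivially equivalent.
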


The proof of Proposition \ref{spin1} follows from a sequence of lemmas below.

Let $I_{m}=\{(i,j)|i,j\in \{1,2,..., m-1,m\}, i< j\}$.

By (S1) and induction we have 
\begin{lemma}\label{addition} 
Let $y_1,..., y_m$ be $m$ elements in $H_1(F_g, \ZZ_2)$. Then
$$q(\sum_{i=1}^m y_i)=\sum _{i=1}^m q(y_i)+\sum _{(i,j)\in I_m} y_i\cdot y_j$$
\end{lemma}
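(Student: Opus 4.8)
The plan is to prove Lemma~\ref{addition} by induction on $m$, taking the two-term identity (S1) as the engine and using only the $\ZZ_2$-bilinearity of the intersection pairing. First I would dispose of the base cases: for $m=1$ both sides reduce to $q(y_1)$ since $I_1=\emptyset$, and for $m=2$ the claimed identity is exactly (S1).

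For the inductive step I would assume the formula for $m-1$ terms, set $S=\sum_{i=1}^{m-1}y_i$, and apply (S1) to the pair $(S,y_m)$:
$$q\Big(\sum_{i=1}^{m}y_i\Big)=q(S+y_m)=q(S)+q(y_m)+S\cdot y_m.$$
The induction hypothesis rewrites $q(S)=\sum_{i=1}^{m-1}q(y_i)+\sum_{(i,j)\in I_{m-1}}y_i\cdot y_j$, while bilinearity of the pairing gives $S\cdot y_m=\sum_{i=1}^{m-1}y_i\cdot y_m$. Substituting these and absorbing $q(y_m)$ into the first sum produces $\sum_{i=1}^{m}q(y_i)$ together with the combined cross-term $\sum_{(i,j)\in I_{m-1}}y_i\cdot y_j+\sum_{i=1}^{m-1}y_i\cdot y_m$.

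The one piece of bookkeeping is the last step: since $I_m=I_{m-1}\sqcup\{(i,m):1\le i\le m-1\}$, these two sums combine into $\sum_{(i,j)\in I_m}y_i\cdot y_j$, which is exactly the desired right-hand side. This closes the induction. Honestly there is no serious obstacle here; the lemma is a purely formal consequence of (S1), and the only thing to watch is that all arithmetic takes place in $\ZZ_2$, so no signs ever intervene as the cross-terms $\alpha\cdot\beta$ accumulate.

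It is worth recording why this formula is the right tool for what follows, even though that lies beyond the present statement. Combined with the uniform intersection numbers $x_i\cdot x_j=1$ of Lemma~\ref{GI}, the formula collapses $q$ on a vector $v=\sum_i c_ix_i$ of weight $k=\sum_i c_i$ to the purely combinatorial expression $q(v)=ak+\binom{k}{2}$ (where $a$ is the common value of $q$ on the orbit). That reduction is what will eventually let one evaluate $\sum_v(-1)^{q(v)}$ via $(1+i)^{2g}=(2i)^g$ and read off the Arf invariant through Lemma~\ref{BU}, yielding Proposition~\ref{spin1}.
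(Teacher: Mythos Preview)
Your proof is correct and is exactly the induction on (S1) that the paper invokes; the paper merely states ``By (S1) and induction'' without writing out the details you supply. Your closing paragraph about the application is also accurate but goes beyond what is asked here.
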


\begin{lemma}\label{01} 
(1) There are exactly  one $\tau_g$-invariant spin structure for each $g$ determined by $q=0$ on the $\tau_g$-orbit for even $g$,
and $q=1$  on the $\tau_g$-orbit for odd $g$.

(2) There are exactly two $v_g$-invariant spin structures for each $g$ determined by either $q=0$ on the $v_g$-orbit or $q=1$ on the $v_g$-orbit.
\end{lemma}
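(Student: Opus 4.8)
The plan is to exploit two structural facts that hold for any diffeomorphism-induced $f$. First, a spin structure is recorded by its quadratic form $q$, and $q$ is completely determined by its values on the basis $x_1,\dots,x_{2g}$ through the addition rule (S1). Second, since $f$ preserves $\ZZ_2$-intersection numbers, $f^*(q)$ is again a quadratic refinement of the \emph{same} intersection form; as two such refinements that agree on a basis must coincide, the equality $f^*(q)=q$ can be tested on the basis alone. I would open the proof by recording this reduction: by (G4), the $f$-invariant spin structures are exactly those $q$ with $q(f(x_i))=q(x_i)$ for $i=1,\dots,2g$.

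For part (1), I would feed the orbit (O1) into this reduction. Since $\tau_g$ sends $x_i\mapsto x_{i+1}$ for $i<2g$ and $x_{2g}\mapsto x_{2g+1}$, invariance forces $q(x_1)=\dots=q(x_{2g})=q(x_{2g+1})=:c$. The crucial observation is that $x_{2g+1}$ is \emph{not} independent: by (G1) it equals $x_1+\dots+x_{2g}$, so its value is already pinned down by the basis values. Applying Lemma \ref{addition} together with (G3) gives $q(x_{2g+1})=\sum_{i=1}^{2g}q(x_i)+|I_{2g}|$, and with all basis values equal to $c$ this reads $q(x_{2g+1})\equiv 2gc+g(2g-1)\equiv g\pmod 2$. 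Imposing the remaining constraint $q(x_{2g+1})=c$ then forces $c\equiv g\pmod 2$, which singles out exactly one admissible value of $c$, hence exactly one invariant spin structure, with $c=0$ for even $g$ and $c=1$ for odd $g$, as claimed.

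For part (2), I would run the same reduction with the orbit (O2). Here $v_g$ cyclically permutes the linearly independent set $x_1,\dots,x_{2g}$, so invariance amounts only to $q(x_1)=\dots=q(x_{2g})=:c$, and no further relation is imposed because this orbit avoids the dependent curve $x_{2g+1}$. Both choices $c=0$ and $c=1$ therefore yield $v_g$-invariant spin structures, giving exactly two.

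The step I expect to carry the real content is the parity bookkeeping in part (1): recognizing that the distinction between one and two invariant structures comes entirely from whether the orbit meets the linearly dependent curve $x_{2g+1}$, which via (G1) imposes a parity constraint absent in the $v_g$ case. The only genuine computation is the count $|I_{2g}|=\binom{2g}{2}=g(2g-1)\equiv g\pmod 2$; everything else is formal once the reduction to the basis is in place.
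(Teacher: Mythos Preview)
Your proposal is correct and follows essentially the same approach as the paper. The only cosmetic difference is that where the paper verifies the converse direction (constant on the orbit $\Rightarrow$ $f$-invariant) by an explicit expansion of $q(f(\sum a_i x_i))$ via Lemma~\ref{addition}, you phrase it as ``$f^*(q)$ is another quadratic refinement of the same intersection form, hence agreement on a basis suffices''; and your parity computation evaluates $q(x_{2g+1})=q(\sum_{i\le 2g}x_i)$ directly rather than the paper's $0=q(\sum_{i\le 2g+1}x_i)$, but these unwind to the same identity.
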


\begin{proof} Suppose $q\in \mathcal S(F_g)$.  Let $f$ be either $\tau_g$ or $v_g$. We first prove that 
$q$ is $f$-invariant if and only if $q$  is the constant with value 0 or 1 on the $f$-orbit. 

Suppose $q$ is $f$-invariant.
 By  (G4),  on the $f$-orbit,  $q$ is a constant, which is either 0 or 1.

Now suppose $q$ is a constant on the $f$-orbit. For any $x\in H_1(F_g, \ZZ_2)$, $x=\sum_{i=1}^{2g}a_ix_i$, where $a_i\in \ZZ_2$,  
and by (O1) or (O2), we have
$$f(x)=f(\sum_{i=1}^{2g}a_ix_i)=\sum_{i=1}^{2g}a_if(x_i)=\sum_{i=1}^{2g}a_ix_{i+1}.$$

Hence

$$f^*(q)(x)=q(f(x))=q(\sum_1^{2g}a_ix_{i+1})$$
$$=\sum_{i=1}^{2g}a_iq(x_{i+1})+\sum _{(i,j)\in I_{2g} } a_ia_jx_{i+1} \cdot x_{j+1}$$.
$$=\sum_{i=1}^{2g}a_iq(x_{i})+\sum_{(i,j)\in I_{2g} } a_ia_jx_{i} \cdot x_{j}=q(\sum_{i=1}^{2g}a_ix_i)=q(x).$$

The fourth equality holds since $q$ is a constant on the $f$-orbit and $x_{i} \cdot x_{j}=1$ for $i< j$ by Lemma \ref{GI},
the third and fifth equalities follow from Lemma \ref{addition}.
So we have $f^*(q)=q$.

For (1),  we have 
$$0=q(x_1+ x_2+... +x_{2g-1}+ x_{2g}+x_{2g+1})$$
$$=\sum_{i=1}^{2g+1} q(x_i)+\sum _{(i,j)\in I_{2g+1} } x_i \cdot x_j$$
$$=(2g+1)q(x_1)+(2g+1)g=q(x_1)+g \,\, \text{mod}  \,\, 2.$$

We obtained the first equality by (G1), the second by Lemma  \ref{addition}, the third by that $q$ is the constant on $\tau_g$-orbit
and Lemma \ref{GI}. So $q(x_1)$ and $g$ have the same parity. 

 For (2), it is obvious since the $v_g$-orbit is a basis of $H_1(F_g, \ZZ_2)$.
\end{proof}

Below $k\equiv t$ means that $k\equiv t\,(\text{mod}\,4)$, and $C_n^k=\frac{n!}{k!(n-k)!}$. Define 
$$\sum_{k\equiv 0}C_{2g}^k=A_0, \,\, \sum_{k\equiv 1}C_{2g}^k=A_1,\,\, \sum_{k\equiv 2}C_{2g}^k=A_2,\,\,\sum_{k\equiv 3}C_{2g}^k=A_3,$$

Recall that $H_1(F_g, \ZZ_2)$ has $2^{2g}$ elements,  and moreover 
$$2^{2g}=\Sigma_{i=0}^{2g} C_{2g}^i =A_0+A_1+A_2+A_3\qquad (1.1).$$

\begin{lemma}\label{0-1}  Let $f$ be either $\tau_g$ or $v_g$. For each $q\in \mathcal S(F_g)$,

(1) if $q=0$ on the $f$-orbit, then $q(x)=0$ for $A_0+A_1$ elements;

(2) if $q=1$ on the $f$-orbit, then $q(x)=0$ for $A_0+A_3$ elements.
\end{lemma}

\begin{proof}

Note for each $x\in H_1(F_g, \ZZ_2)$, there is a unique subset $\{x_{i_1},...,x_{i_m}\}$ of the basis $\{x_1, x_2,..., x_{2g}\}$ of $H_1(F_g, \ZZ_2)$, such that $x=x_{i_1}+...+x_{i_m}$,
and we call such $x$ is of length $m$.

Suppose $x=x_{i_1}+...+x_{i_m}$, according to Lemma \ref{addition} and (G3) we have 

$$q(x_{i_1}+...+x_{i_m})=C_m^2 \qquad (1.3)$$
if $q$ is constant 0 on the $f$-orbit; and 
$$q(x_{i_1}+...+x_{i_m})=C_m^2+m \qquad (1.4) $$
if $q$ is constant 1 on the $f$-orbit. 

Next we verify (1) by using (1.3). Fix $m\in \{0,1,..., 2g\}$, we have 

$$C_m^2=\frac{m(m-1)}2=\left\{
\begin{array}{ccc}
\text{even}, \,\, & \text{for} \,\,m=4l, 4l+1;\\
\text{odd}, \,\, & \text{for} \,\,m=4l+2, 4l+3.
\end{array}
\right. $$ 

By (1.3), we obtained  that $q(x)=0$ on those elements $x$ of length $4l$ and of length $4l+1$; 
and  $q(x)=1$ on those elements $x$ of length $4l+2$ and  of  length $4l+3$. 

Recall for each $m\in \{0,1,..., 2g\}$,  
there are $C_{2g}^m$ elements of length $m$, and  $\sum_{k\equiv i}C_{2g}^k$ is used to define $A_i$. So when $m$ runs over $\{0,1,...,2g\}$, 
$q(x)=0$ on  $A_0+A_1$ elements.
This is the conclusion of (1).

Next  we verify (2) by using (1.4). Fix $m\in \{0,1,..., 2g\}$,  we have  

$$C_m^2+m=\frac{m(m-1)}2+m =\left\{
\begin{array}{ccc}
\text{even}, \,\, & \text{for} \,\,m=4l, 4l+3;\\
\text{odd}, \,\, & \text{for} \,\,m=4l+1, 4l+2.
\end{array}
\right. $$ 

By (1.4), we obtained  that $q(x)=0$ on those elements $x$ of length $4l$ and of length $4l+3$; 
and  $q(x)=1$ on those elements $x$ of length $4l+1$ and of length $4l+2$. 
So when $m$ runs over $\{0,1,...,2g\}$, 
$q(x)=0$ on  $A_0+A_3$ elements.
This is the conclusion of (2).
\end{proof}

\begin{lemma}\label{A_i}
Suppose $g>0$.

(1) $ A_0+A_1=2^{2g-1}+2^{g-1}$ if and only if  $g\equiv 0,1$;

(2) $ A_0+A_3=2^{2g-1}+2^{g-1}$ if and only if $g\equiv 0,3$.
\end{lemma}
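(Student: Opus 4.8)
The plan is to evaluate the four sums $A_0,A_1,A_2,A_3$ in closed form by the standard roots-of-unity filter applied to the generating function $(1+x)^{2g}=\sum_{k=0}^{2g}\binom{2g}{k}x^k$, and then read off the condition on $g$. First I would record the two elementary halves coming from $x=\pm 1$:
$$A_0+A_2=A_1+A_3=2^{2g-1},$$
since $(1+1)^{2g}=2^{2g}$ splits evenly between even- and odd-indexed terms while $(1-1)^{2g}=0$ (here $g>0$ is used). The remaining information comes from $x=i$: separating $(1+i)^{2g}=\sum_k\binom{2g}{k}i^k$ into real and imaginary parts gives
$$A_0-A_2=\mathrm{Re}\,(1+i)^{2g},\qquad A_1-A_3=\mathrm{Im}\,(1+i)^{2g}.$$

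The key simplification, and the reason the answer depends only on $g \bmod 4$, is that the top index $2g$ is even: from $(1+i)^2=2i$ we get $(1+i)^{2g}=(2i)^g=2^g\,i^g$, so all $\sqrt 2$ factors disappear and $\mathrm{Re}\,(1+i)^{2g}=2^g\,\mathrm{Re}(i^g)$, $\mathrm{Im}\,(1+i)^{2g}=2^g\,\mathrm{Im}(i^g)$, where $i^g\in\{1,i,-1,-i\}$ cycles according to $g\equiv 0,1,2,3\pmod 4$. Solving the four linear relations for the individual $A_r$ and forming the two sums of interest, I would obtain
$$A_0+A_1=2^{2g-1}+2^{g-1}\bigl(\mathrm{Re}(i^g)+\mathrm{Im}(i^g)\bigr),\qquad A_0+A_3=2^{2g-1}+2^{g-1}\bigl(\mathrm{Re}(i^g)-\mathrm{Im}(i^g)\bigr).$$

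It then remains only to tabulate the brackets over the four residues. For $A_0+A_1$ the quantity $\mathrm{Re}(i^g)+\mathrm{Im}(i^g)$ equals $+1$ exactly when $g\equiv 0,1$ and $-1$ when $g\equiv 2,3$; for $A_0+A_3$ the quantity $\mathrm{Re}(i^g)-\mathrm{Im}(i^g)$ equals $+1$ exactly when $g\equiv 0,3$ and $-1$ otherwise. Since $g>0$ forces $2^{g-1}\neq 0$, the two possible values $2^{2g-1}\pm 2^{g-1}$ are genuinely distinct, so $2^{2g-1}+2^{g-1}$ is attained precisely in the $+1$ cases; this gives statements (1) and (2) respectively. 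There is no real obstacle here: once the filter is set up the computation is routine, and the only points demanding care are the sign bookkeeping for the real and imaginary parts of $i^g$ and the observation that the alternative value $2^{2g-1}-2^{g-1}$ differs from $2^{2g-1}+2^{g-1}$, which is exactly what upgrades each congruence condition to an \emph{if and only if}.
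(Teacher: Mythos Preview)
Your proof is correct and follows essentially the same route as the paper: both arguments extract $A_0-A_2$ and $A_1-A_3$ as the real and imaginary parts of $(1+i)^{2g}=2^g i^g$ and combine these with the total $\sum A_r=2^{2g}$ (you make the even/odd split $A_0+A_2=A_1+A_3=2^{2g-1}$ explicit via $x=-1$, which the paper leaves implicit). Your remark that $2^{g-1}\neq 0$ for $g>0$ is exactly what secures the ``only if'' direction.
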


\begin{proof} Note first  that 
$$ A_0+iA_1-A_2-iA_3=(1+i)^{2g}.$$

Then

$$A_0-A_2=Re [(1+i)^{2g}]=Re [(\sqrt 2 e^{i\pi/4}) ^{2g}]=2^gRe [(e^{i\pi/4}) ^{2g}]=
\left\{
\begin{array}{ccc}
2^g, \, & g\equiv 0;\\
0, \, & g\equiv 1,3;\\
-2^g, \, & g\equiv 2.
\end{array}
\right.$$

$$A_1-A_3=Im [(1+i)^{2g}]=Im  [(\sqrt 2 e^{i\pi/4}) ^{2g}]=2^gIm [(e^{i\pi/4}) ^{2g}]=\left\{
\begin{array}{ccc}
0, \, & g\equiv 0, 2; \\
2^g, \, & g\equiv 1;\\
-2^g, \, & g\equiv 3.
\end{array}
\right.$$

By adding and subtracting last two identities we have

$$(A_0+A_1)-(A_2+A_3)=\left\{
\begin{array}{ccc}
2^g, \,\, & g\equiv 0, 1; \\
-2^g, \,\, & g\equiv 2, 3.
\end{array}
\right.$$

$$(A_0+A_3)-(A_2+A_1)=\left\{
\begin{array}{ccc}
2^g, \,\, & g\equiv 0, 3; \\
-2^g, \,\, & g\equiv 1, 2.
\end{array}
\right.$$
which implies that $ A_0+A_1=2^{2g-1}+2^{g-1}$ if and only if $g\equiv 0, 1$ and $ A_0+A_3=2^{2g-1}+2^{g-1}$ if and only if $g\equiv 0, 3$.
\end{proof}

\begin{proof}[{Proof of Proposition \ref{spin1}}]  Suppose $q\in \mathcal B(F_g)$.

(1) By Lemma \ref{01} (1),
$q$ is $\tau_g$-invariant if and only if either 
(i) $q=0$  on the $\tau_g$-orbit and $g$ is even, or 
(ii) $q=1$  on the $\tau_g$-orbit and $g$ is odd. 

In case (i), $q=0$ on $A_0+A_1$ elements by Lemma 
\ref{0-1}. Then $q$ is bounded  if and only if $A_0+A_1=2^{2g-1}+2^{g-1}$ by Lemma \ref{BU}. This is the case if and only if  $g=4k$  by Lemma \ref{A_i}.

In case (ii)  $q=0$ on $A_0+A_3$ elements by Lemma 
\ref{0-1}. Then $q$ is bounded if and only if  $A_0+A_3=2^{2g-1}+2^{g-1}$ by Lemma  \ref{BU}. This is the case if and only if $g=4k+3$ by Lemma \ref{A_i}.

(2) By Lemma \ref{01} (2),
$q$ is $v_g$-invariant if and only if either (i) $q=0$ on the $v_g$-orbit or (ii) $q=1$ on the $v_g$-orbit.
As argued in (1), in case (i) we have  $g=4k$ or $4k+1$, and in case (ii) we have $g=4k$ or $4k+3$.
\end{proof}

\begin{lemma}\label{w_g=tau}
$q$ is $\tau_g$ invariant if and only if $q$ is $w_g$ invariant.
\end{lemma}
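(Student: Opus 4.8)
The plan is to leverage two structural facts already in place: the identity $w_g^2=\tau_g$ established just before the statement, and the commutativity of $w_g$ with $\tau_g$ (which follows from the fact that $\eta$ and $\tau_g$ commute). Combined with the uniqueness of the $\tau_g$-invariant spin structure recorded in Lemma~\ref{01}(1), these should yield both implications with no further homology computation. I would split the argument into the two directions, obtaining the ``only if'' from the square relation and the ``if'' from uniqueness.

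For the direction that $w_g$-invariance implies $\tau_g$-invariance, I would simply push the identity $w_g^2=\tau_g$ through the functoriality rule (S3). Since $(w_g\circ w_g)^*=w_g^*\circ w_g^*$, if $w_g^*(q)=q$ then $\tau_g^*(q)=w_g^*\bigl(w_g^*(q)\bigr)=w_g^*(q)=q$, so $q$ is $\tau_g$-invariant. This step is immediate and requires nothing beyond (S3).

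For the converse, I would suppose $\tau_g^*(q)=q$ and argue by uniqueness. Because $\eta$ and $\tau_g$ commute, so do $w_g=\eta\circ\tau_g^{g+1}$ and $\tau_g$; applying (S3) to $w_g\circ\tau_g=\tau_g\circ w_g$ gives $\tau_g^*\circ w_g^*=w_g^*\circ\tau_g^*$. Hence $\tau_g^*\bigl(w_g^*(q)\bigr)=w_g^*\bigl(\tau_g^*(q)\bigr)=w_g^*(q)$, so $w_g^*(q)$ is again a $\tau_g$-invariant spin structure. By Lemma~\ref{01}(1) there is exactly one $\tau_g$-invariant spin structure, which forces $w_g^*(q)=q$.

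The only delicate point is this converse direction: commutativity by itself shows that $w_g^*$ preserves the set of $\tau_g$-invariant spin structures but does not pin down $w_g^*(q)$. The essential ingredient is therefore the uniqueness assertion of Lemma~\ref{01}(1), which collapses that set to a single point. I expect no genuine obstacle once that uniqueness is invoked correctly.
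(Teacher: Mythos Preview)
Your argument is correct. The ``only if'' direction via $w_g^2=\tau_g$ and (S3) is exactly what the paper does; the difference lies in the converse. The paper observes directly that $\eta(x_i)=-x_i$, so $\eta_*$ is the identity on $H_1(F_g,\ZZ_2)$ and hence ${w_g}_*={\tau_g}_*^{\,g+1}$ there; together with ${w_g}_*^2={\tau_g}_*$ this shows $\langle{w_g}_*\rangle=\langle{\tau_g}_*\rangle$ as subgroups of $\Aut(H_1(F_g,\ZZ_2))$, so the two maps have identical invariant sets in $\mathcal{S}(F_g)$. Your route instead uses commutativity to show $w_g^*$ stabilizes the $\tau_g$-fixed set and then invokes the uniqueness from Lemma~\ref{01}(1) to collapse that set to a point. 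Both are valid; the paper's approach is self-contained and actually establishes the stronger fact that $w_g^*$ and $\tau_g^*$ generate the same cyclic group on $\mathcal{S}(F_g)$, while yours is a clean soft argument that trades the small homology computation for a dependence on the counting lemma already proved.
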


\begin{proof}
From Figure 2 and the description of $\eta$, we have $\eta(x_i)=-x_i$
for each $x_i$ in $\tau_g$-orbit, and then  $$\eta_*: H_1(F_g, \ZZ_2)\to H_1(F_g, \ZZ_2)$$
is the identity, that is to say,  
$${w_g}_*=\eta_*\circ {\tau_g}_*^{g+1}={\tau_g}_*^{g+1}.$$
Combining ${w_g}_*^2={\tau_g}_*,$ the Lemma follows from the definition.
\end{proof}

\begin{proof}[Proof of Theorem \ref{4g+2,4g}] 
Now Theorem  \ref{4g+2,4g} follows from Proposition \ref{spin1}, Lemma \ref{w_g=tau} and Theorem \ref{criterion}.
\end{proof}

\section{Extending periodic maps of prime orders}
\begin{proposition}\label{B-U}
Let $f: F_g\to F_g$ be a map of order $p^m$, where $p$ is an odd prime and $m$ is an integer.

(i) If p is not a divisor of $2^{2g-1}+2^{g-1}$, then there is a bounded $f$-invariant spin structure.

(ii) If p is not a divisor of $2^{2g-1}-2^{g-1}$, then there is an unbounded $f$-invariant spin structure.
\end{proposition}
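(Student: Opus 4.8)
The plan is to exploit the action of the cyclic group $G=\langle f\rangle$ of order $p^m$ on the finite sets $\mathcal{B}_g$ and $\mathcal{U}_g$, and to find a fixed point via a counting argument modulo $p$. By Theorem \ref{criterion}, producing a bounded $f$-invariant spin structure is exactly what is needed, so statement (i) is the payload; statement (ii) is the parallel claim for the unbounded set, proved identically.

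First I would recall that $f$ acts on $\mathcal{S}(F_g)$ by the formula $(S2)$, and that this action preserves the Arf invariant: since $f_*$ is a symplectic (intersection-form preserving) automorphism of $H_1(F_g;\ZZ_2)$, it carries bounded spin structures to bounded ones and unbounded to unbounded. Hence $G$ acts on $\mathcal{B}_g$ and on $\mathcal{U}_g$ separately. The key observation is the orbit decomposition: because $|G|=p^m$, every orbit has size a power of $p$, so an orbit is either a single fixed point or has size divisible by $p$. Therefore
$$|\mathcal{B}_g| \equiv \#\{\text{fixed points in } \mathcal{B}_g\} \pmod p,$$
and likewise for $\mathcal{U}_g$. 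This is the standard Burnside-type congruence for $p$-group actions.

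The heart of the argument is then immediate from Lemma \ref{cardinality}, which gives $|\mathcal{B}_g|=2^{2g-1}+2^{g-1}$ and $|\mathcal{U}_g|=2^{2g-1}-2^{g-1}$. For part (i), if $p$ does not divide $2^{2g-1}+2^{g-1}$, then $|\mathcal{B}_g|\not\equiv 0\pmod p$, so the number of fixed points in $\mathcal{B}_g$ is nonzero modulo $p$; in particular it is positive, yielding a bounded $f$-invariant spin structure. For part (ii), the identical reasoning applied to $\mathcal{U}_g$ and $|\mathcal{U}_g|=2^{2g-1}-2^{g-1}$ produces an unbounded $f$-invariant spin structure.

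I do not expect a serious obstacle here: the only point requiring care is the verification that $f^*$ preserves the partition $\mathcal{S}(F_g)=\mathcal{B}_g\sqcup\mathcal{U}_g$, which follows because the Arf invariant is an invariant of the isomorphism class of the quadratic form and $f_*$ respects the $\ZZ_2$-intersection pairing (this is exactly the content underlying Lemma \ref{trans}, where $\mathcal{M}(F_g)$ acts on each piece). Everything else is a one-line application of the $p$-group orbit-counting congruence together with the cardinalities already computed in Lemma \ref{cardinality}. The hypothesis that $p$ is odd guarantees $m\ge 1$ gives a genuine $p$-group, though in fact the congruence argument only needs $|G|$ to be a power of the prime $p$.
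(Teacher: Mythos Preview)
Your argument is correct and is essentially identical to the paper's: both use the orbit decomposition of the $p$-group $\langle f\rangle$ acting on $\mathcal{B}_g$ (resp.\ $\mathcal{U}_g$), observe that every orbit has $p$-power size, and conclude from Lemma~\ref{cardinality} that a singleton orbit must exist when $p\nmid |\mathcal{B}_g|$ (resp.\ $p\nmid |\mathcal{U}_g|$). Your extra remark that $f^*$ preserves the Arf invariant (hence the partition $\mathcal{B}_g\sqcup\mathcal{U}_g$) is a point the paper leaves implicit, and your closing observation that only the $p$-power order is actually used is also correct.
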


\begin{proof}
(i) Let $r=p^m$ and 
$$G=\ZZ_r=\left< f \right>$$
 acts on $\mathcal{B}_g$. For any $g\in  \left< f \right>$ and $q\in \mathcal{B}_g$.  Denote by 
$$Orb(q)=\{g^*(q)| g\in G\}$$
and 
$$Stab_q=\{g\in G| g^*(q)=q\}$$
Since $p$ is prime and $|G|=p^m$, $Stab_q$ is a subgroup of $G$, we have $|Stab_q|=p^u$ for $0\le u \le m.$
Then 
$$|Orb(q)|=\frac {|G|}{|Stab_q|}=p^s$$ for some $0\le s \le m$.

Let  $Orb(q_1),..., Orb(q_l)$ be disjoint orbits such that
$$\mathcal{B}_g=\bigcup_{i=1}^l Orb(q_i)$$
  
Since $|\mathcal{B}_g|=2^{2g-1}+2^{g-1}$ by Lemma \ref{cardinality}, and $2^{2g-1}+2^{g-1}$ is not a multiple of $p$,
we have $ |Orb (q_j)|$ is not multiple of $p$  for some $j\in \{1,..., l\}$.

It follows that $|Orb (q_j)|=1$. That is $f^*(q_j)=q_j$. So $f$ has an invariant bounded spin structure.

(ii) The proof is the similar to that of (i).
\end{proof}

\begin{lemma}\label{many primes} (H. Sun) There are infinitely many prime numbers that are not divisors of any $2^{2g-1}+2^{g-1}$.
\end{lemma}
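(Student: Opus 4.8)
The plan is to reduce the statement to a condition on the multiplicative order of $2$ and then manufacture infinitely many primes satisfying it by an explicit elementary construction. First I would factor
$$2^{2g-1}+2^{g-1}=2^{g-1}(2^g+1),$$
so that for an odd prime $p$ the condition $p\nmid 2^{2g-1}+2^{g-1}$ is equivalent to $p\nmid 2^g+1$; the prime $2$ is irrelevant, since it divides the $g=2$ term and so is not among the primes we seek. I would then note that $p\mid 2^g+1$ for some $g\ge 1$ exactly when $2^g\equiv -1\pmod p$ is solvable, and this happens precisely when the multiplicative order $d$ of $2$ modulo $p$ is even: if $d$ is even then $2^{d/2}$ is the unique element of order $2$ in $\FF_p^\times$, namely $-1$, whereas if $d$ is odd then $2^{2g}\equiv 1$ forces $d\mid g$ and hence $2^g\equiv 1\neq -1$. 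Thus the lemma is equivalent to the assertion that there are infinitely many primes $p$ for which $2$ has odd order modulo $p$.

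Second, I would produce such primes from the numbers $N_k=2^{3^k}-1$. Writing $a=2^{3^{k-1}}$, I would use the factorization
$$N_k=2^{3^k}-1=(a-1)(a^2+a+1),$$
in which $a-1=2^{3^{k-1}}-1=N_{k-1}$. The key computation is that any common divisor of $a^2+a+1$ and $a-1$ divides $a^2+a+1\equiv 3\pmod{a-1}$, while $3\nmid a^2+a+1$ because $a\equiv 2\pmod 3$ gives $a^2+a+1\equiv 1\pmod 3$; hence the new factor $M_k=a^2+a+1$ is coprime to $N_{k-1}$. Since $M_k>1$, it has a prime divisor $p_k$, and $p_k\nmid N_{k-1}$. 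Because $2^{3^j}-1\mid N_{k-1}$ for every $j\le k-1$, the order of $2$ modulo $p_k$ divides $3^k$ but not $3^{k-1}$, so it equals exactly $3^k$, which is odd. Distinct $k$ then give distinct orders, hence distinct primes, producing infinitely many primes with $2$ of odd order, as required.

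The main obstacle to watch is the extraction of a genuinely \emph{new} prime at each stage. I expect the crux is exactly the coprimality step $\gcd(M_k,N_{k-1})\mid 3$ together with the verification $3\nmid M_k$: this is what replaces an appeal to Zsygmondy's (Bang's) theorem and guarantees that $p_k$ has not already occurred among the divisors of $2^{3^j}-1$ for $j<k$. Once that is in place the order computation and the distinctness of the $p_k$ are immediate, so no deeper analytic input (such as Dirichlet's theorem on primes in arithmetic progressions or any density estimate) is needed.
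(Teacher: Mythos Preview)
Your argument is correct, but it takes a genuinely different route from the paper. Both proofs begin with the same factorization $2^{2g-1}+2^{g-1}=2^{g-1}(2^g+1)$ and reduce to finding infinitely many odd primes $p$ with $p\nmid 2^g+1$ for all $g$. From there the paper argues via quadratic residues: for any prime $p\equiv 7\pmod 8$, $2$ is a quadratic residue, so $2^g\equiv -1$ would force $-1$ to be a quadratic residue, contradicting $p\equiv 3\pmod 4$; Dirichlet's theorem then supplies infinitely many such primes. You instead characterize the desired primes as exactly those for which $2$ has odd multiplicative order, and then manufacture one such prime $p_k$ of exact order $3^k$ from each $2^{3^k}-1$ by an elementary coprimality computation. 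Your approach is more self-contained in that it avoids Dirichlet entirely, essentially replacing it with a Bang--Zsygmondy-style step. The paper's approach, on the other hand, yields the entire residue class $p\equiv 7\pmod 8$, and this explicit description is used in the proof of Theorem~\ref{3-5-7}(iii), where the primes ``including $7$'' of the form $8k+7$ are named; your construction proves the lemma as stated but would not directly furnish that stronger conclusion.
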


\begin{proof} We first prove the following

{\bf Claim:} Each prime number in the form $p=8k+7$ is not a factor of $2^g+1$ for any natural number $g$.

 Suppose that $p$ is a divisor of $2^g+1$, then $2^g\equiv -1 \,\text{mod} \, p$ holds.

Since $p$ is in the form of $8k+7$, $2$ is a quadratic residue of $p$ by \cite[Theorem 3.16]{EW}, that is, there exists an integer $k$ such that $$k^2=2 \, \text{mod} \, p.$$

So we have $$-1=2^g=k^{2g} \, \text{mod} \, p.$$ Thus $-1$ is also a quadratic residue of $p$. By \cite[Theorem 3.13]{EW}, such $p$ must be in the form of $4l+1$, but our $p$ is in the form of $4l+3$. So the claim is proved.

By the Dirichlet's theorem \cite[Theorem 10.5]{EW}, there are infinitely many primes in the form of $8k+7$ which are odd. Then the lemma follows from the Claim  and $2^{2g-1}+2^{g-1}=2^{g-1}(2^g+1)$.
\end{proof}

\begin{proof}[Proof of Theorem \ref{3-5-7}]
(i) When  $p=3$ and  $g$ is even, let $g=2k$. Then 
$$2^{2g-1}+2^{g-1} \equiv  (-1)^{2g-1}+(-1)^{g-1} \equiv (-1)^{4k-1}+(-1)^{2k-1} \equiv -2 \,  \text{mod} \, 3.$$
That is to say $3$ is not a divisor of $2^{2g-1}+2^{g-1}$.

(ii) Suppose  $p=5$.   If $g\equiv 0 \,  \text{mod} \, 4$, let $g=4k$. Then 
$$2^{2g-1}+2^{g-1}=2^{8k-1}+2^{4k-1}=16^{2k-1}8+16^{k-1}8\equiv 8+8 \equiv 1\, \text{mod} \, 5.$$
  If $g\equiv 1 \,  \text{mod} \, 4$, let $g=4k+1$. Then 
$$2^{2g-1}+2^{g-1}=2^{8k+1}+2^{4k}=16^{2k}2+16^{k}\equiv 2+1= 3\, \text{mod} \, 5.$$
  If $g\equiv 3 \,  \text{mod} \, 4$, let $g=4k+3$. Then 
$$2^{2g-1}+2^{g-1}=2^{8k+5}+2^{4k+2}=16^{2k+1}2+16^{k}4\equiv 2+4 \equiv 1\, \text{mod} \, 5.$$
 
That is to say $5$ is not a divisor of $2^{2g-1}+2^{g-1}$ in each case.

(iii) By Lemma \ref{many primes} and its proof, there are infinite primes of the form $8k+7$, including 7, 
are not divisors of any $2^{2g-1}+2^{g-1}$.

In each of the three cases above,  there is a bounded $f$-invariant spin structure by Proposition \ref{B-U} (1) and then $f$ is extendable by Theorem \ref{criterion}.

Moreover, by Theorem \ref{4g+2,4g} (1) for $g=1$, the order 3 map $w_1^2$ is not extendable, and for $g=2$ the order 5 map $w_2^2$
is not extendable, where $w_g$ is a Wiman map on $F_g$. So the condition on $g$ in (i) and (ii) can not be removed in general. 
\end{proof}




\end{document}